\theoremstyle{definition}
\newtheorem{definition}{Definition}[section]
\theoremstyle{plain}
\newtheorem{theorem}{Theorem}[section]
\theoremstyle{plain}
\newtheorem{lemma}{Lemma}[section]
\theoremstyle{remark}
\newtheorem*{remark}{Remark}
\theoremstyle{plain}
\newtheorem{property}{Proposition}
\theoremstyle{plain}
\newtheorem{prop}{Property}
\theoremstyle{remark}
\newtheorem{ex}{Example}
\newcommand{\id}{\text{id}}
\newcommand{\yb}{\rho_{\text{YB}}}
\newcommand{\tr}{\text{Tr}}
\newcommand{\supp}{\operatorname{supp}}
\newcommand{\sinf}{\mathfrak{S}_\infty}
\newcommand{\Z}{\mathbb{Z}}
\newcounter{braid}
\newcounter{strands}
\def\cross{%
  \@ifnextchar^{\message{Got sup}\cross@sup}{\cross@sub}}
\def\cross@sup^#1_#2{\render@cross{#2}{#1}}
\def\cross@sub_#1{\@ifnextchar^{\cross@@sub{#1}}{\render@cross{#1}{1}}}
\def\cross@@sub#1^#2{\render@cross{#1}{#2}}
\def\render@cross#1#2{
  \def\strand{#1}
  \def\crossing{#2}
  \pgfmathsetmacro{\cross@y}{-\value{braid}*\braid@h}
  \pgfmathtruncatemacro{\nextstrand}{#1+1}
  \foreach \thread in {1,...,\value{strands}}
  {
    \pgfmathsetmacro{\strand@x}{\thread * \braid@w}
    \ifnum\thread=\strand
    \pgfmathsetmacro{\over@x}{\strand * \braid@w + .5*(1 - \crossing) * \braid@w}
    \pgfmathsetmacro{\under@x}{\strand * \braid@w + .5*(1 + \crossing) * \braid@w}
    \draw[braid] \pgfkeysvalueof{/tikz/braid start} +(\under@x pt,\cross@y pt) to[out=-90,in=90] +(\over@x pt,\cross@y pt -\braid@h);
    \draw[braid] \pgfkeysvalueof{/tikz/braid start} +(\over@x pt,\cross@y pt) to[out=-90,in=90] +(\under@x pt,\cross@y pt -\braid@h);
    \else
    \ifnum\thread=\nextstrand
    \else
     \draw[braid] \pgfkeysvalueof{/tikz/braid start} ++(\strand@x pt,\cross@y pt) -- ++(0,-\braid@h);
    \fi
   \fi
  }
  \stepcounter{braid}
}
\tikzset{braid/.style={double=\pgfkeysvalueof{/tikz/braid colour},double distance=1pt,line width=2pt,white}}
\newcommand{\braid}[2][]{%
  \begingroup
  \pgfkeys{/tikz/strands=2}
  \tikzset{#1}
  \pgfkeysgetvalue{/tikz/braid width}{\braid@w}
  \pgfkeysgetvalue{/tikz/braid height}{\braid@h}
  \setcounter{braid}{0}
  \let\sigma=\cross
  #2
  \endgroup
}
\newcommand\ignore[1]\null
\title{Yang-Baxter extremal characters of wreath products of finite groups with the infinite symmetric group}
\author{Hicham Assakaf}
\affil{\small Vrije Universiteit Brussel, Brussels, Belgium\\
 Ecole Normale Supérieure de Paris-Saclay, Gif-sur-Yvette, France\\
Sorbonne Université, Paris, France}
\affil{Contact: \texttt{assakaf@imj-prg.fr}}
\date{July 2024}  
\begin{document}
\maketitle

\begin{abstract}
 Let $T$ be a finite group. To a representation $\pi$ of $T$ and an involutive solution of the Yang-Baxter equation (an $R$-matrix) verifying the \textit{"extended" reflection equation}, we associate a character of a representation of the wreath product $G:=T\wr \mathfrak{S}_\infty$. The set of extremal characters of $G$ is in bijection with a continuous set of parameters. In this article, we characterize exactly what subset of parameters does correspond to an extremal Yang-Baxter character of $G$.
\end{abstract}

\textbf{Keywords}: Representation Theory, Yang-Baxter equation, Reflection equation


\section*{Introduction}
The representation theory of the infinite symmetric group, denoted by $\mathfrak{S}_{\infty}$, occupies a central place in mathematics and mathematical physics, particularly in describing the equilibrium states of quantum systems with a large number of particles. As early as the 1960s, the foundational work of E. Thoma \cite{thoma1964unzerlegbaren} classified the extremal characters of $\mathfrak{S}_{\infty}$ (the infinite-dimensional analogues of irreducible characters) using a family of continuous parameters, now known as Thoma parameters.

Recently, a profound connection has been established between this classical theory and the algebraic structures underlying quantum integrable models. In a series of works, Lechner, Pennig, and Wood \cite{lechner2019yang} demonstrated that involutive solutions to the Yang-Baxter equation ($R$-matrices) define a natural class of representations of $\mathfrak{S}_{\infty}$. In this framework, the $R$-matrix, which describes the scattering between two particles, allows for the construction of specific extremal characters, thus offering a direct physical interpretation of Thoma parameters in terms of 1+1 dimensional quantum field theory \cite{Lechner_2007}.

However, this approach is limited to systems of indistinguishable particles without complex internal structure. Yet, many physical models of interest, such as spin chains or lattice models, involve particles possessing internal degrees of freedom (spin, color, charge) \cite{faddeev} governed by a local symmetry group $T$. Mathematically, the appropriate structure to capture this combined symmetry is no longer the simple symmetric group, but the wreath product $G := T \wr \mathfrak{S}_{\infty}$. The extremal characters of this group have been studied by Hirai and Hirai \cite{hirai}.

The objective of this article is to characterize the Yang-Baxter type extremal characters for the wreath product $T \wr \mathfrak{S}_{\infty}$. Given a Hilbert space $V$, we show that to preserve the integrable structure in the presence of internal degrees of freedom, the involutive unitary $R$-matrix must satisfy, in addition to the standard Yang-Baxter equation, a compatibility condition with a representation $\pi\in End(W\otimes V)$ of the group $T$. We identify this condition as the \textbf{Extended Reflection Equation}:
\begin{equation} \label{eq:ERE}
    R_{1}\pi(t)R_{1}\pi(t^{\prime}) = \pi(t^{\prime})R_{1}\pi(t)R_{1}, \quad \forall t, t' \in T
\end{equation}
Introduced by Cherednik \cite{cherednik} to investigate quantum integrable systems on the half-line, the reflection equation serves as a boundary counterpart to the Yang–Baxter equation. While the braiding (Yang–Baxter operator) characterizes the scattering of two particles colliding in a one-dimensional system, the solution to the reflection equation describes the interaction of a particle with a boundary. This equation appears naturally in various contexts within mathematical physics, notably in the study of braid groups on higher-genus surfaces (handlebodies) introduced by Schwiebert \cite{Schwiebert94}. Here, the extended reflection equation generalizes the reflection equation and involves a more general action instead of the $K$-matrix appearing in the reflection equation.

In this work, we establish the following results:
\begin{enumerate}
    \item We define Yang-Baxter type representations for $G = T \wr \mathfrak{S}_{\infty}$ governed by a pair consisting of an $R$-matrix and a representation $\pi$ of $T$ satisfying the extended reflection equation.
    \item We recall the explicit bijection between the solutions of this system and a subset of the generalized Thoma parameters for $G$ given by \cite{hirai}.
    \item We precisely characterize this subset in the Theorem \ref{final}, thus generalizing the main theorem of \cite{lechner2019yang} to the case of internal symmetries. This characterization is the main result of this article.
\end{enumerate}

\section{Yang-Baxter representations of \texorpdfstring{$\mathfrak{S}_\infty$}{Sinf}}
The infinite symmetric group $\mathfrak{S}_\infty$, consists of all permutations of the natural numbers $\mathbb{N}$ that fix all but finitely many elements. In this part, we will study the Yang-Baxter extremal characters of this group.

\subsection{Extremal characters}
In the section we present the definition of an extremal character from \cite{borodin} and a characterization for extremality.\\
If $G$ is a finite group, and $\pi$ is a finite dimensional (complex) representation, then the \textit{character} of $\pi$ is the application $\chi^\pi:g\in G \longmapsto \tr(\pi(g))$. This definition can not be applied to more complicated groups (like the groups we will study in this paper) because when the representations become infinite dimensional, it becomes hard to define the characters using the trace. Thus, it is useful to characterize more generally the characters.
\begin{definition}
A function $\varphi:G\longrightarrow\mathbb{C}$ on a (non-necessarily finite) group $G$ is \textit{positive definite} if for any $k\geq1$, and $g_1,\dots g_k\in G$, the matrix $[\varphi(g_j^{-1}g_i)]_{1\leq i,j\leq k}$ is Hermitian positive definite. Or equivalently for any $g\in G$, we have $\varphi(g^{-1})=\overline{\varphi(g)}$, and for all $k\geq1$ and any $g_1,\dots,g_k\in G, z_1,\dots z_k\in\mathbb{C}$ we have 
$$\sum_{i,j=1}^kz_i\overline{z_j}\varphi(g_j^{-1}g_i)\geq 0$$
\end{definition}
Let $\hat{G}$ be the set of equivalency classes of irreducible representations of a finite group $G$.
\begin{property}
Let $G$ be a finite group. A function $\varphi:G\longrightarrow\mathbb{C}$ is central positive definite and takes value 1 at the unit element of $G$ if and only if it is a convex combination of normalized characters of the irreducible representations of $G$:
$$ \varphi=\sum_{\pi\in \hat{G}}c_\pi \frac{\chi^\pi}{\dim \pi},\quad c_\pi\geq0,\quad\sum_{\pi\in\hat{G}}c_\pi=1 $$
That is, all such functions form a simplex whose vertices are the normalized characters of the irreducible representations of $G$.
\end{property}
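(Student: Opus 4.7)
The plan is to establish the two implications separately. The easy direction consists in verifying that each normalized irreducible character $\chi^\pi/\dim\pi$ is central, positive definite, and takes value $1$ at $e$, and that these three properties are preserved under convex combinations. Centrality and the value at $e$ are immediate from the trace property; for positive definiteness, I choose a unitary realisation of $\pi$, so that $\chi^\pi(g_j^{-1}g_i) = \tr(\pi(g_j)^*\pi(g_i))$, which gives
$$\sum_{i,j=1}^k z_i \overline{z_j}\,\chi^\pi(g_j^{-1}g_i) \;=\; \tr\Bigl(\bigl(\textstyle\sum_i z_i \pi(g_i)\bigr)^*\bigl(\textstyle\sum_i z_i \pi(g_i)\bigr)\Bigr) \;\geq\; 0.$$

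For the converse, since $\varphi$ is central it is a class function, and I exploit the fact that the irreducible characters form an orthonormal basis of the space of class functions for the inner product $\langle f_1,f_2\rangle = \frac{1}{|G|}\sum_{g\in G}f_1(g)\overline{f_2(g)}$. This gives a unique expansion $\varphi = \sum_{\pi\in\hat G}a_\pi\,\chi^\pi$ with $a_\pi = \langle\varphi,\chi^\pi\rangle$; setting $c_\pi := a_\pi\dim\pi$, the desired equality $\varphi = \sum_\pi c_\pi \chi^\pi/\dim\pi$ holds, and evaluating at $e$ gives $\sum_\pi c_\pi = \varphi(e) = 1$.

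The heart of the argument is the non-negativity $c_\pi \geq 0$, which I intend to obtain by testing the positive-definiteness inequality against the coefficients of the minimal central idempotent
$$p_\pi \;:=\; \frac{\dim\pi}{|G|}\sum_{g\in G}\overline{\chi^\pi(g)}\,g \;\in\; \mathbb{C}[G].$$
Using $\overline{\chi^\pi(g)} = \chi^\pi(g^{-1})$ one checks $p_\pi^* = p_\pi$, and Schur orthogonality yields $p_\pi^2 = p_\pi$. Taking $z_g := \tfrac{\dim\pi}{|G|}\overline{\chi^\pi(g)}$ for $g$ running through $G$ in the positive-definiteness inequality, the left-hand side rewrites, via $p_\pi^*p_\pi = p_\pi$, as a single sum, yielding
$$0 \;\leq\; \sum_{g,h\in G}z_g\overline{z_h}\,\varphi(h^{-1}g) \;=\; \sum_{g\in G} z_g\,\varphi(g) \;=\; \dim\pi\cdot \langle\varphi,\chi^\pi\rangle \;=\; c_\pi.$$

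The only minor obstacle is the bookkeeping tying the self-adjointness and idempotency of $p_\pi$ to the collapse of the double sum into a single sum; the result is otherwise entirely classical and purely algebraic, with no analytic input required since $G$ is finite.
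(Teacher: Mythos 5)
Your argument is correct and complete. The paper states this Proposition without proof (it is a classical fact recorded only to motivate Definition 1.2), so there is nothing to compare against; your route — the trace identity $\sum_{i,j} z_i\overline{z_j}\chi^\pi(g_j^{-1}g_i)=\tr\bigl(A^*A\bigr)$ for the forward direction, and testing positive definiteness against the coefficients of the minimal central idempotent $p_\pi$ (using $p_\pi^*p_\pi=p_\pi$) to get $c_\pi=\varphi(p_\pi)\geq 0$ for the converse — is the standard one and all steps check out. The only point worth making explicit is that the final ``simplex'' assertion rests on the linear independence of the irreducible characters, which you already invoke implicitly through the uniqueness of the expansion $\varphi=\sum_\pi a_\pi\chi^\pi$.
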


This proposition motivates the following more general definition of a character of an arbitrary group.

\begin{definition}\label{character}
\begin{itemize}
\item A \textit{character} of an arbitrary group $G$ is a function $\chi: G\longrightarrow\mathbb{C}$ which is central, positive definite, and takes value 1 at the unit element.
\item The set of all characters is a convex set. Thus we can define an \textit{extremal} character as an extremal point of this set (i.e. a point which is not a non-trivial convex combination of other characters).
\end{itemize}
\end{definition}

This generalizes the common notion of a character of a finite group. Indeed, if $G$ is a finite group, and $\pi$ is a finite-dimensional representation of $G$, then 
$$g\longmapsto \frac{\tr(\pi(g))}{\dim \pi}$$
is a character in the sense of the Definition $\ref{character}$. The notion of extremal character is also a generalization of the notion of irreducible character. Indeed, if the group is finite (or compact), then the set of all characters in sense of the Definition $\ref{character}$ is a simplex which vertices are exactly the normalized irreducible characters.

We can mention another point of view on the extremality of a character $\tau$ of a countable group $G$. 
\begin{theorem}[\cite{thoma1964unzerlegbaren}]
A character $\tau$ of a countable group $G$ is extremal if and only if $\pi_\tau(G)''$ is a factor. If $G$ is infinite, then it is necessarily a type $II_1$ factor. \\
A character $\chi$ of $\mathfrak{S}_\infty$ is extremal if and only if for all permutations $\sigma,\gamma\in\mathfrak{S}_\infty$ with disjoint supports, $$\chi(\sigma\gamma)=\chi(\sigma)\chi(\gamma)$$
\end{theorem}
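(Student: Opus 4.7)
The plan is to treat both assertions with the same toolkit, namely the GNS construction applied to a character $\tau$. This produces a representation $\pi_\tau$ on a Hilbert space $H_\tau$ together with a cyclic unit vector $\xi_\tau$ such that $\tau(g)=\langle\pi_\tau(g)\xi_\tau,\xi_\tau\rangle$. Because $\tau$ is central, the vector $\xi_\tau$ is tracial for $M_\tau:=\pi_\tau(G)''$, so the vector state $\omega_{\xi_\tau}$ is a faithful normal tracial state $\tau_M$ on $M_\tau$, giving $M_\tau$ the structure of a finite von Neumann algebra.

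For the equivalence extremal iff factor, I would exploit the standard bijection between convex decompositions of $\tau_M$ and central projections of $M_\tau$. If $z\in Z(M_\tau)$ is a non-trivial projection with $0<\tau_M(z)<1$, then the functions $g\mapsto \tau_M(\pi_\tau(g)z)/\tau_M(z)$ and $g\mapsto \tau_M(\pi_\tau(g)(1-z))/\tau_M(1-z)$ are central, positive definite, normalized to $1$ at the identity, and yield a non-trivial convex decomposition of $\tau$, contradicting extremality. Conversely, given $\tau=\lambda\tau_1+(1-\lambda)\tau_2$, a Radon--Nikodym-style argument for normal tracial states on $M_\tau$ produces a central density implementing the splitting. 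Finiteness of the factor is automatic from the faithful normal trace; ruling out type $I_n$ when $G$ is infinite uses that a type $I_n$ factor representation would factor through a finite quotient of $G$, forcing the dichotomy finite-dimensional/type $\mathrm{II}_1$.

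For the factorization criterion on $\mathfrak{S}_\infty$, the forward implication combines the factor property with an averaging argument that uses the rich conjugation action of $\mathfrak{S}_\infty$ on itself. Given $\sigma,\gamma$ with disjoint finite supports, the subgroup $H$ of permutations fixing $\supp(\sigma)$ pointwise is itself isomorphic to $\mathfrak{S}_\infty$, and any $h\in H$ commutes with $\pi_\tau(\sigma)$. Choosing a sequence $h_n\in H$ that pushes $\supp(h_n\gamma h_n^{-1})$ off any prescribed finite set, one forms convex (Dixmier-style) combinations of the operators $\pi_\tau(h_n\gamma h_n^{-1})$, all of which have the same trace $\chi(\gamma)$. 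A weak limit of such combinations lies in the commutant of the growing subalgebras generated by permutations of the first $N$ integers, hence in $Z(M_\tau)$. Since $M_\tau$ is a factor, this limit equals $\chi(\gamma)\cdot 1$. Substituting into $\chi(\sigma\gamma)=\tau_M(\pi_\tau(\sigma)\pi_\tau(\gamma))$ and using the commutation of $\pi_\tau(\sigma)$ with the averaging yields $\chi(\sigma)\chi(\gamma)$.

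The converse for $\mathfrak{S}_\infty$ is handled by showing that multiplicativity on disjoint supports forces $M_\tau$ to be a factor. If $z\in Z(M_\tau)$ is a projection, then $\tau_z(g):=\tau_M(\pi_\tau(g)z)/\tau_M(z)$ is again a character; one shows it still factorizes on disjoint supports and agrees with $\chi$ on elements of sufficiently large support (using approximation of any permutation by a product of conjugates with far-apart supports and the density of the group algebra), forcing $\tau_z=\chi$ and $z\in\{0,1\}$. The main obstacle, and the genuinely non-trivial step, is the Dixmier-type averaging lemma producing a central limit of conjugates of $\pi_\tau(\gamma)$: this is where the combinatorial richness of $\mathfrak{S}_\infty$ (the fact that any finite support can be translated away by infinitely many commuting conjugators) meets the operator-algebraic factor property.
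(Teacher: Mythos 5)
The paper offers no proof of this statement (it is quoted from Thoma), so your proposal can only be judged on its own terms. Its architecture --- GNS construction, the correspondence between convex decompositions of the trace and central elements of $M_\tau$, and Dixmier-type averaging over group conjugates --- is the standard route, and your forward implication for $\mathfrak{S}_\infty$ is essentially complete: a weak limit point of the $\pi_\tau(h_n\gamma h_n^{-1})$ lies in $M_\tau$ (weakly closed), commutes with every $\pi_\tau(\mathfrak{S}_N)$ and hence lies in $Z(M_\tau)=\mathbb{C}1$, has trace $\chi(\gamma)$, and the commutation $h_n\sigma=\sigma h_n$ together with centrality of $\chi$ gives $\chi(\sigma\gamma)=\chi(\sigma)\chi(\gamma)$ in the limit; you do not even need convex combinations for this half.

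Two genuine problems remain. First, your justification of the type statement is wrong: a type $\mathrm{I}_n$ factor representation of an infinite group need not factor through a finite quotient (the infinite dihedral group has $2$-dimensional irreducible unitary representations with infinite image, whose normalized traces are extremal characters generating a type $\mathrm{I}_2$ factor). Indeed the clause ``necessarily type $\mathrm{II}_1$'' is false as stated even for $\mathfrak{S}_\infty$ unless finite-dimensional representations are excluded: the trivial and sign characters are extremal and generate the type $\mathrm{I}_1$ factor $\mathbb{C}$. What you can and should prove is the dichotomy finite type $\mathrm{I}$ versus type $\mathrm{II}_1$, which follows from finiteness of $M_\tau$ (faithful normal trace) plus factoriality. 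Second, your converse for $\mathfrak{S}_\infty$ is not yet a proof: the assertion that $\tau_z$ ``agrees with $\chi$ on elements of sufficiently large support'' does not parse (every element of $\mathfrak{S}_\infty$ has finite support), and you give no reason why $\tau_z$ should again be multiplicative. The clean argument reuses your averaging lemma in the other direction: let $E(\pi_\tau(\gamma))\in Z(M_\tau)$ be a central weak limit of averaged conjugates of $\pi_\tau(\gamma)$; for any $\sigma$, choosing conjugators that move $\operatorname{supp}(\gamma)$ off $\operatorname{supp}(\sigma)$ and invoking multiplicativity gives $\tau_M\bigl(\pi_\tau(\sigma)^*E(\pi_\tau(\gamma))\bigr)=\chi(\sigma^{-1})\chi(\gamma)=\tau_M\bigl(\pi_\tau(\sigma)^*\,\chi(\gamma)1\bigr)$, whence $E(\pi_\tau(\gamma))=\chi(\gamma)1$ by faithfulness of the trace and weak density of the span of $\pi_\tau(G)$. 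The trace-preserving conditional expectation onto the center is therefore scalar on a weakly dense subalgebra while being the identity on $Z(M_\tau)$, so the center is trivial and $\chi$ is extremal by the first part.
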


\subsection{Thoma parameters}

Let $\mathbb{T}$ be the set of all sequences $(\alpha,\beta)$ with $\alpha=\{\alpha_i\}$ and $\beta=\{\beta_i\}$ verifying :
\begin{itemize}
\item $\alpha_i\geq0$, and $\beta_i\geq0$
\item $\alpha_i\geq\alpha_{i+1}$,~~$\beta_i\geq\beta_{i+1}$
\item $\sum_i\alpha_i+\beta_i\leq1$
\end{itemize}
E. Thoma proved in \cite{thoma1964unzerlegbaren} that this set of parameters is in bijection with the set of extremal characters of $\mathfrak{S}_\infty$ : to a couple $(\alpha,\beta)\in\mathbb{T}$ we associate the character verifying for all $n\geq2$ :
$$\chi(c_n)=\sum_i{\alpha_i}^n + (-1)^{n-1}{\beta_i}^n$$
with $c_n=(1~2~\cdots~n)$. As we will see in the next section, in \cite{lechner2019yang}, the authors characterized the subspace of $\mathbb{T}$ corresponding to the Yang-Baxter characters.

\subsection{Yang-Baxter characters of the infinite symmetric group}
In this section, we present the results of G. Lechner, U. Pennig and S. Wood in their article \cite{lechner2019yang} about the characterization of the Thoma parameters associated to a Yang-Baxter character of the infinite symmetric group $\mathfrak{S}_\infty$. 
Let $(~V,(\cdot,\cdot)~)$ be a finite dimensional Hilbert space of dimension $d$. \\
Let $\bigotimes_{n=1}^\infty End(V)$ to be the object defined algebraically as the inductive limit (or direct limit) of the finite tensor products.
Formally, we consider the directed system where each algebra $\mathcal{A}_n = \bigotimes_{k=1}^n \mathrm{End}(V)$ is embedded into $\mathcal{A}_{n+1}$ via the mapping $$x \mapsto x \otimes \mathrm{Id}_V.$$ 
The infinite tensor product is then identified with the union of these nested subalgebras:
\[
\bigotimes_{n=1}^{\infty} \mathrm{End}(V) := \varinjlim_{n} \mathcal{A}_n = \bigcup_{n=1}^{\infty} \mathcal{A}_n
\]
This construction ensures that any element in this space acts as the identity operator on all but a finite number of components.
\begin{definition}\label{defrepyb}
Let $R\in End(V\otimes V)$ be an involutive solution of the Yang-Baxter equation in $End(V\otimes V\otimes V)$ : $R^2=1\otimes1$ and
$$(R\otimes1)(1\otimes R)(R\otimes1)=(1\otimes R)(R\otimes1)(R\otimes1)$$
We assume $R$ to be unitary with respect to $(\cdot,\cdot)$.
We say that $R$ is an $R$-matrix.
The \textit{\textbf{Yang-Baxter representation}} $\rho_R$ of $\mathfrak{S}_\infty$ associated to the $R$-matrix $R$ is the representation defined on the generators $\sigma_i=(i~~i+1)$, for $i\geq1$ as follows : 
$$\rho_R(\sigma_i)=1^{\otimes i-1}\otimes R\otimes1\otimes\cdots\in \bigotimes_{n=1}^\infty End(V)$$
The \textbf{\textit{Yang-Baxter character}} $\chi_R$ associated is obtained by left-composing by the normalized trace $$\tau=\bigotimes_{i\geq1}\frac{\operatorname{Tr}}{d}$$
\end{definition}
It is clear that $\chi_R$ is always an extremal character, by definition of the trace and the representation. Then, we can wonder to which subset of $\mathbb{T}$ the Yang-Baxter (extremal) characters correspond. \\
Let $\mathbb{T}_{\text{YB}}\subset \mathbb{T}$ be the subset of couples $(\alpha,\beta)$ verifying :
\begin{itemize}
\item Finitely many $\alpha_i,\beta_i$ are non-zero.
\item There exists a $d\in\mathbb{N}$ such that for all $i$, $d\alpha_i,d\beta_i\in\mathbb{N}$.
\item $\sum_i \alpha_i+\beta_i=1$
\end{itemize}
\begin{theorem}[\cite{lechner2019yang}]\label{thoma}
The set $\mathbb{T}_{\text{YB}}$ corresponds to the Yang-Baxter characters of $\mathfrak{S}_\infty$ by the bijection described in the previous section.
\end{theorem}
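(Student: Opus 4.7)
The proof splits into the two containments between Yang-Baxter characters of $\mathfrak{S}_\infty$ and parameters in $\mathbb{T}_{\text{YB}}$. In both directions the main computational tool is the identity
$$\chi_R(c_n) = d^{-n}\,\tr_{V^{\otimes n}}(R_1 R_2 \cdots R_{n-1}),$$
which follows from $c_n = \sigma_1 \cdots \sigma_{n-1}$ and the definition of the normalised trace $\tau$, together with the fact that $\chi_R$ is multiplicative on permutations with disjoint supports (immediate from the tensor product structure of $\rho_R$). These two observations reduce the problem to computing $\chi_R$ on a single $n$-cycle.

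For the \emph{construction direction} $\mathbb{T}_{\text{YB}}\subseteq\{\chi_R\}$, I would build an R-matrix realising any given $(\alpha,\beta)\in\mathbb{T}_{\text{YB}}$ by hand. Write $\alpha_i=m_i/d$ and $\beta_j=n_j/d$ with $\sum_i m_i+\sum_j n_j=d$; take $V=\bigoplus_i V_i^+\oplus\bigoplus_j V_j^-$ with $\dim V_i^+=m_i$ and $\dim V_j^-=n_j$, assigning a sign $\epsilon_k\in\{\pm1\}$ to each block $V_k$ according to whether it sits on the positive or negative side; and define $R$ to equal $\epsilon_k\cdot\id$ on every same-block piece $V_k\otimes V_k$ and the ordinary flip on every cross-block piece $V_k\otimes V_l$ with $k\neq l$. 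A short case analysis on the block-labels of the three tensor slots in $V^{\otimes3}$ verifies $R^2=\id\otimes\id$ and the Yang-Baxter equation. For the character, only monochromatic basis tensors contribute to the trace of $\rho_R(c_n)$, since $c_n$ is a single cycle and cross-block flips leave no diagonal entries; on each $V_k^{\otimes n}$ every factor $R_l$ acts as the scalar $\epsilon_k$, so $\rho_R(c_n)|_{V_k^{\otimes n}}=\epsilon_k^{n-1}\id$ and contributes $\epsilon_k^{n-1}d_k^n$ to the trace. Dividing by $d^n$ yields $\chi_R(c_n)=\sum_i(m_i/d)^n+(-1)^{n-1}\sum_j(n_j/d)^n$, which is exactly Thoma's formula for $(\alpha,\beta)$.

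For the \emph{classification direction}, I would show that the Thoma parameters of an arbitrary Yang-Baxter character $\chi_R$ already lie in $\mathbb{T}_{\text{YB}}$. The plan is to establish a structure theorem for involutive R-matrices: from $R^2=\id\otimes\id$, the YBE and $\dim V=d<\infty$, one extracts a decomposition $V=\bigoplus V_k$ into blocks of integer dimensions $d_k$ with signs $\epsilon_k\in\{\pm1\}$, such that up to an equivalence preserving $\chi_R$ the matrix $R$ takes exactly the canonical form constructed above. The same character computation then gives Thoma parameters of the form $m_i/d$ and $n_j/d$ with $\sum_i m_i+\sum_j n_j=d$, so $(\alpha,\beta)\in\mathbb{T}_{\text{YB}}$.

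The step I expect to be the main obstacle is this structure theorem. A generic involutive R-matrix does not come equipped with an obvious block decomposition, and extracting one is the technical heart of the argument: one must decompose $V\otimes V$ into the $(\pm1)$-eigenspaces of $R$, understand how the YBE on $V^{\otimes 3}$ constrains their interaction, and then exploit the commutant of $\rho_R(\mathfrak{S}_\infty)$ in $\bigotimes_{n\geq 1}\operatorname{End}(V)$ to produce the required blocks and signs. This is where the Lechner-Pennig-Wood argument does its real work; once a canonical form is in hand, the explicit character computation used in the construction direction immediately finishes the proof.
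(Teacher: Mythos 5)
First, note that the paper does not prove this theorem: it is imported wholesale from \cite{lechner2019yang}, and the only ingredient the paper reproduces is the normal-form construction $N=\bigl(\boxplus_i 1_{d_i^+}\bigr)\boxplus\bigl(\boxplus_i(-1_{d_i^-})\bigr)$ together with the cited Propositions 4.3--4.6. Your \emph{construction direction} is exactly this: your block R-matrix ($\epsilon_k\cdot\id$ on $V_k\otimes V_k$, flip on cross terms) is the paper's $N$, and your trace computation (only monochromatic tensors survive, each block contributing $\epsilon_k^{n-1}d_k^n$) is correct and gives Thoma's formula. That half is fine and matches the source.

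The gap is the \emph{classification direction}, which you yourself flag as the main obstacle and then do not supply. As written, "establish a structure theorem putting every involutive $R$ into the canonical form up to a character-preserving equivalence" is not a proof step but a restatement of (more than) what needs to be proved, and it risks circularity: in Lechner--Pennig--Wood the statement that every involutive R-matrix is equivalent to a normal-form one is a \emph{consequence} of the completeness of the character invariant, not an input to computing the Thoma parameters. The actual route to showing $(\alpha,\beta)\in\mathbb{T}_{\text{YB}}$ for an arbitrary involutive $R$ does not require any normal form: one introduces the partial trace $\varphi_R=(\id\otimes\tfrac{\tr}{d})(R)\in End(V)$, shows that $\chi_R(c_n)$ is expressed through powers of $\varphi_R$ (so the Thoma parameters are read off from its spectrum, divided by $d$), and then proves the key integrality lemma that the spectrum of $d\,\varphi_R$ consists of integers, using involutivity and the Yang--Baxter equation (via the associated Temperley--Lieb/subfactor structure). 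Finite dimensionality of $V$ then gives finitely many nonzero parameters and the normalisation $\sum_i\alpha_i+\beta_i=1$. Your proposal never identifies this spectral mechanism, which is where the rationality and integrality conditions defining $\mathbb{T}_{\text{YB}}$ actually come from; without it the hard inclusion is unproved.
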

To construct an $R$-matrix such that $\chi_R$ is associated to a given couple of Thoma parameters $(\alpha,\beta)$, the authors introduce the \textit{normal form $R$-matrices.}

\begin{definition}
     Let $V,W$ be finite dimensional Hilbert spaces and let $X\in End(V\otimes V)$, $Y\in End(W\otimes W)$. Let $F\in End((V\otimes W)\oplus(W\otimes V))$ be the flip. We define $X\boxplus Y\in End((V\oplus W)\otimes (V\oplus W))$ as
	\begin{align*}\label{eq:DefBoxplus}
	       X\boxplus Y &= X\oplus Y\oplus F\quad\text{on}\\
	       (V\oplus W)\otimes (V\oplus W) &= (V\otimes V)\oplus(W\otimes W)\oplus((V\otimes W)\oplus(W\otimes V))
	\end{align*}	
\end{definition}

In other words, $X\boxplus Y$ acts as $X$ on $V\otimes V$, as $Y$ on $W\otimes W$, and as the flip on the mixed tensors involving factors from both, $V$ and $W$. Note that the above definition works in the same way for infinite dimensional Hilbert spaces.

Let $(\alpha,\beta)\in\mathbb{T}_{\text{YB}}$. Let $d_i^+:=d\alpha_i\in\mathbb{N}$ and $d_i^-:=d\beta_i\in\mathbb{N}$. We introduce a decomposition  $$V=\bigoplus_{i,\varepsilon} V_{d_i}^\varepsilon$$
such that $\text{dim}(V_{d_i}^\varepsilon)=d_i^\varepsilon$ and let $$
N:=\left(\boxplus_{i}1_{d_i^+}\right)\boxplus\left(\boxplus_{i} (-1_{d_i^-})\right)\in End(V\otimes V)$$
where $N$ acts as $\varepsilon\cdot id$ on $V_{d_i}^\varepsilon\otimes V_{d_i}^\varepsilon$ and as the flip on $V_{d_i}^\varepsilon\otimes V_{d_j}^{\varepsilon'}$ with $(i,\varepsilon)\neq (j,\varepsilon')$. 

\begin{prop}[\cite{lechner2019yang} Proposition 4.3 - 4.6]
$N$ is an $R$-matrix and the Thoma parameters associated to $N$ are $(\alpha,\beta)$.
\end{prop}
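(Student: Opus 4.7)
My plan is to split the statement into two parts: (i) $N$ is an involutive solution of the Yang-Baxter equation, and (ii) the character $\chi_N$ corresponds to the Thoma parameters $(\alpha,\beta)$ under the bijection of Theorem~\ref{thoma}.

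For (i), both identities can be verified after decomposing $V^{\otimes 2}$ (respectively $V^{\otimes 3}$) into the homogeneous pieces indexed by pairs (respectively triples) of block labels $b=(i,\varepsilon)$, since $N$ preserves this block structure. Involutivity is then immediate: on a pure summand $V_{d_i}^\varepsilon\otimes V_{d_i}^\varepsilon$ the operator $N$ is the scalar $\varepsilon\in\{\pm1\}$ and hence squares to the identity, while on each mixed summand $N$ is the flip, which is also involutive. For the braid relation I would do a case analysis on the triple of block labels $(b_1,b_2,b_3)$: when all three coincide, both sides act as $\varepsilon^3\,\id$; when all three are pairwise distinct, every occurrence of $N$ reduces to the flip and the equation is the classical braid relation for the flip; in the remaining case of two equal labels and one distinct, a direct computation on a basis vector $e_{a_1}\otimes e_{a_2}\otimes e_{a_3}$ shows that both sides produce the same permutation of the three tensor slots and collect the same scalar $\varepsilon$ from the unique pair of adjacent blocks that agree.

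For (ii), I would compute $\chi_N(c_n)=d^{-n}\,\tr(\rho_N(c_n))$ on a basis of $V$ adapted to the block decomposition, labelling each basis vector by the block $b=(i,\varepsilon)$ it belongs to. Using $c_n=\sigma_1\sigma_2\cdots\sigma_{n-1}$ and applying the factors from right to left, each elementary operator $\rho_N(\sigma_k)$ acting on a pure tensor either rescales it by $\varepsilon$ without changing the sequence of block labels (when the labels currently at positions $k,k+1$ agree) or swaps the two adjacent tensor factors together with the corresponding labels (when they differ). Since the successive pairs of positions touched by $\sigma_{n-1},\sigma_{n-2},\ldots,\sigma_1$ are $(n-1,n),(n-2,n-1),\ldots,(1,2)$, each visited exactly once, a downward induction starting from position $n$ shows that the final block profile can equal the initial one only if every step was of the rescaling type: once $\sigma_k$ has been applied, position $k+1$ is frozen, so its content at that moment must already equal $b_{k+1}$, which combined with the induction hypothesis forces $b_k=b_{k+1}$, and ultimately $b_1=\cdots=b_n$. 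Only constant-block basis vectors therefore contribute to the trace; on $V_b^{\otimes n}$ with $b=(i,\varepsilon)$ the operator $\rho_N(c_n)$ acts as $\varepsilon^{n-1}\,\id$, yielding a contribution $\varepsilon^{n-1}(d_i^\varepsilon)^n$. Summing over all blocks gives $\tr(\rho_N(c_n))=\sum_i(d_i^+)^n+(-1)^{n-1}\sum_i(d_i^-)^n$, and dividing by $d^n$ produces exactly the Thoma formula $\sum_i\alpha_i^n+(-1)^{n-1}\beta_i^n$.

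The main obstacle I expect is the irreversibility step in (ii): one must be sure that once a swap between distinct block labels has occurred at some stage, no subsequent operator can compensate for it. This relies crucially on the right-to-left order $\sigma_{n-1},\sigma_{n-2},\ldots,\sigma_1$, in which the rightmost endpoint of the active pair only decreases, so each position becomes frozen exactly once. The induction sketched above makes this precise, but some care is needed in the book-keeping of the intermediate profiles, because after a few steps the labels at the still-active positions are in general no longer the original $b_j$'s, and one has to keep track of how the current values depend on the sequence of earlier swaps.
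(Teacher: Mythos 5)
Your argument is correct, and it is worth noting that the paper itself gives no proof of this proposition --- it is quoted from \cite{lechner2019yang} --- so your write-up supplies details the text omits; the block-adapted basis computation you use is essentially the intended one for normal form $R$-matrices. Both halves check out: involutivity and the braid relation reduce to the case analysis on block labels you describe, and your ``freezing'' induction for part (ii) is sound, because $N_k$ sends basis vectors to $\pm$ basis vectors, position $k+1$ is untouched by $N_{k-1},\dots,N_1$, and a swap at step $k$ would leave position $k+1$ holding a vector from the wrong (orthogonal) block, killing the diagonal entry. One small imprecision: in the braid-relation case with exactly two equal labels, the subcase $b_1=b_3\neq b_2$ has \emph{no} adjacent pair of agreeing blocks at the start --- the single factor of $\varepsilon$ is collected only after a flip has brought the two equal-block slots together --- so your phrase ``the unique pair of adjacent blocks that agree'' does not literally apply there, though the direct computation you invoke does verify the identity in all three subcases. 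Finally, to pass from the computed values $\chi_N(c_n)=\sum_i\alpha_i^n+(-1)^{n-1}\beta_i^n$ to the identification of the Thoma parameters you are implicitly using that $\chi_N$ is extremal (hence multiplicative over disjoint cycles and determined by its values on the $c_n$); the paper asserts this extremality for all Yang--Baxter characters, so the step is legitimate, but it deserves a sentence.
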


More generally, the construction gives the following property
\begin{prop}[\cite{lechner2019yang}]
Let $V,W$ be two Hilbert spaces of dimension $d,d'$. If $R,R'$ are $R$-matrices on $V\otimes V$ and $W\otimes W$, associated to Thoma parameters $(\alpha,\beta)$ and $(\tilde{\alpha},\tilde{\beta})$, then $R\boxplus R'$ is an $R$-matrix with Thoma parameters $(\widehat{\alpha},\widehat{\beta})$ such that 
$$\{\widehat{\alpha}_i\}=\{\frac{d}{d+d'}\alpha_k,\frac{d'}{d+d'}\tilde{\alpha}_\ell;k,\ell\}$$
$$\{\widehat{\beta}_i\}=\{\frac{d}{d+d'}\beta_k,\frac{d'}{d+d'}\tilde{\beta}_\ell;k,\ell\}$$
\end{prop}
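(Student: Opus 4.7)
By Thoma's theorem and the fact that $\chi_{R\boxplus R'}$ is extremal by construction, it suffices to compute $\chi_{R\boxplus R'}(c_n)$ for every $n\geq 2$ and then read off the Thoma parameters. That $R\boxplus R'$ is involutive and satisfies the braid relation follows from a direct case analysis over the four summands $V\otimes V$, $V\otimes W$, $W\otimes V$, $W\otimes W$---each case reducing either to the Yang-Baxter equation for $R$, for $R'$, or to a trivial identity involving the flip---or by invoking the corresponding statement in \cite{lechner2019yang}.

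The core of the argument is the colour decomposition
\[
  (V\oplus W)^{\otimes n}=\bigoplus_{\varepsilon\in\{V,W\}^n} V^{\varepsilon_1}\otimes\cdots\otimes V^{\varepsilon_n},\qquad V^V:=V,\ V^W:=W.
\]
The definition of $\boxplus$ forces $\rho_{R\boxplus R'}(\sigma_i)$ to act on the colour tuple $\varepsilon$ by the transposition $(i,i{+}1)$: when $\varepsilon_i=\varepsilon_{i+1}$ the operator $R$ or $R'$ preserves the summand (the swap is trivial), and when $\varepsilon_i\neq\varepsilon_{i+1}$ the flip swaps the two colours. Consequently $\rho_{R\boxplus R'}(c_n)$ permutes the colour summands via the $n$-cycle $c_n$, so only colour patterns fixed by $c_n$ contribute to its trace; these are exactly the two monochromatic tuples $(V,\ldots,V)$ and $(W,\ldots,W)$. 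The monochromatic summands $V^{\otimes n}$ and $W^{\otimes n}$ are invariant under every $\rho_{R\boxplus R'}(\sigma_i)$, and on them the operator restricts to $\rho_R(\sigma_i)$ and $\rho_{R'}(\sigma_i)$ respectively. Hence
\[
  \tr_{(V\oplus W)^{\otimes n}}\rho_{R\boxplus R'}(c_n)=\tr_{V^{\otimes n}}\rho_R(c_n)+\tr_{W^{\otimes n}}\rho_{R'}(c_n).
\]

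Dividing by $(d+d')^n$ and using $\chi_R(c_n)=\tr\,\rho_R(c_n)/d^n$ together with the analogous identity for $R'$ gives
\[
  \chi_{R\boxplus R'}(c_n)=\Bigl(\tfrac{d}{d+d'}\Bigr)^n\chi_R(c_n)+\Bigl(\tfrac{d'}{d+d'}\Bigr)^n\chi_{R'}(c_n).
\]
Substituting Thoma's formula $\chi_R(c_n)=\sum_k\alpha_k^n+(-1)^{n-1}\sum_k\beta_k^n$ (and likewise for $R'$) identifies $(\widehat{\alpha},\widehat{\beta})$ exactly as stated. The only delicate step is the colour decomposition: one has to check carefully that $\boxplus$ genuinely preserves the block structure indexed by colour tuples (in particular that the flip components send colour summands to colour summands, so that the trace calculation reduces to the monochromatic diagonal) and that the restriction of $\rho_{R\boxplus R'}(c_n)$ to a monochromatic summand is literally $\rho_R(c_n)$, respectively $\rho_{R'}(c_n)$, rather than a conjugate.
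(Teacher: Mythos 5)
Your argument is correct: the colour decomposition of $(V\oplus W)^{\otimes n}$, the observation that only the two monochromatic summands survive in the trace of $\rho_{R\boxplus R'}(c_n)$, and the resulting identity $\chi_{R\boxplus R'}(c_n)=\bigl(\tfrac{d}{d+d'}\bigr)^n\chi_R(c_n)+\bigl(\tfrac{d'}{d+d'}\bigr)^n\chi_{R'}(c_n)$, combined with extremality and the uniqueness of Thoma parameters, give exactly the stated multiset formula. Note that the paper itself states this proposition without proof (it is imported from \cite{lechner2019yang}), and your trace-over-colour-patterns computation is essentially the argument used in that reference, so there is nothing to flag.
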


\section{Yang-Baxter extremal characters of \texorpdfstring{$T\wr\mathfrak{S}_\infty$}{TSinf}}
Let $ T $ be a finite group. The infinite symmetric group $ \mathfrak{S}_\infty $ acts naturally on the set $\bigcup_{n} T^n$ by permuting the coordinates of the elements. This action allows us to define the semi-direct product $\left(\bigcup_{n} T^n \right) \rtimes \mathfrak{S}_\infty $. We denote this semi-direct product by $ G := T \wr \mathfrak{S}_\infty $, where $ T \wr \mathfrak{S}_\infty $ is the wreath product of $T$ with $ \mathfrak{S}_\infty $.

\subsection{Yang-Baxter representations of the group \texorpdfstring{$G$}{G}}
Let $V,W$ be two finite dimensional Hilbert spaces. To generalize the previous result to the extremal characters of $G$, we firstly have to define what a Yang-Baxter character is. To define something canonical, we shall proceed as follows 
\begin{definition}
    Let $\pi$ be a unitary representation of $T$ on $W\otimes V$, and $R\in End(V\otimes V)$ an involutive unitary $R$-matrix.\\
    Let $\rho_R:\sinf\longrightarrow\bigotimes_{n=0}^\infty End(V)$ to be the Yang-Baxter representation of $\sinf$ associated to $R$ defined in Definition \ref{defrepyb}.
    For all $g=(d,\sigma)\in G$, let $$\rho_{\pi,R}(g):=\rho_{\pi,R}((d,1))\rho_{\pi,R}((1,\sigma))\in End(W)\bigotimes End(V)^{\otimes\infty},$$
    \noindent with $$\rho_{\pi,R}((1,\sigma)):=1\otimes\rho_R(\sigma)\in End(W)\bigotimes End(V)^{\otimes\infty},$$ 
    and if $ d=(t_i)_{i\in\mathbb{N}}\in\bigcup_{n}T^n$ then
    $$\rho_{\pi,R}((d,1)):=\widetilde\pi(t_1)\times\left(R_1\widetilde\pi(t_2)R_1\right)\times\cdots\times\left(R_{n-1}\dots R_1\widetilde\pi(t_n)R_1\dots R_{n-1}\right)\times\cdots,$$
   where $$R_i:=1\otimes 1^{\otimes (i-1)}\otimes R\otimes 1^{\otimes\infty}\in End(W)\bigotimes End(V)^{\otimes\infty},$$
   and $$\widetilde\pi(t):=\pi(t)\otimes1^{\otimes\infty}\in End(W)\bigotimes End(V)^{\otimes\infty} .$$
   We will write $\pi$ instead of $\widetilde\pi$ in all the paper.
     Again $\rho_{\pi,R}((d,1)\in End(W)\bigotimes End(V)^{\otimes\infty} $.\\
    We say that $(\pi,R)$ is a \textit{Yang-Baxter couple} if \begin{equation*}
    \rho_{\pi,R}\colon\left\{
    \begin{aligned}
        G&\longrightarrow End(W)\bigotimes End(V)^{\otimes\infty}\\
        g&\longmapsto \rho_{\pi,R}(g)
    \end{aligned}
    \right.
\end{equation*}
defines a representation of $G$. We will call it $\rho$ if there is no ambiguity on $(\pi,R)$.
\end{definition}
\begin{ex}
    If $1$ denotes the trivial representation and $R$ is an involutive unitary $R$-matrix then $(1,R)$ is a Yang-Baxter couple. When we compose $\rho_{1,R}$ by $\tau$ we obtain the extremal Yang-Baxter associated to $R$.
\end{ex}

\begin{property} \label{extendedRE}
    A couple $(\pi,R)$ is a Yang-Baxter couple if and only if $R$ verify the Yang-Baxter equation and $(\pi,R)$ verify the \textbf{extended reflection equation} :
    $$\forall t,t'\in T,\qquad R_1\pi(t)R_1\pi(t')=\pi(t')R_1\pi(t)R_1,$$
    where we write $\pi(s)$ instead of $\pi(s)\otimes1$
\end{property}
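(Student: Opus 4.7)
The plan is to check that $\rho_{\pi,R}$ respects all the defining relations of $G = T\wr \mathfrak{S}_\infty$, translating each into an algebraic condition on $R$ and $\pi$. Writing $\pi_i(t) := R_{i-1}\cdots R_1\,\pi(t)\,R_1\cdots R_{i-1}$, a direct computation using $\pi(1)=1$ and $R_j^2=1$ shows that $\rho_{\pi,R}$ sends the generator ``$t$ at position $i$'' to $\pi_i(t)$. The relations to verify are therefore: (i) the Coxeter relations among the $\sigma_i$; (ii) the commutation $\pi_i(t)\pi_j(t')=\pi_j(t')\pi_i(t)$ for $i\neq j$; and (iii) the semidirect-product rule $R_i\,\pi_j(t)\,R_i=\pi_{\sigma_i(j)}(t)$.

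Relation (i) holds iff $R$ is an involutive solution of the Yang-Baxter equation, by the results of the first section. Relation (iii) is essentially automatic: for $j=i$ it is the definition of $\pi_{i+1}$; for $j=i+1$ it follows from $R_i^2=1$; and for $j\notin\{i,i+1\}$ it reduces to $R_i$ commuting with $\pi_j(t)$, which follows from the fact that $R_i$ commutes with each $R_k$ for $|i-k|\geq 2$ and with $\pi(t)$ itself, the latter being supported on $W\otimes V_1$ while $R_i$ is supported on $V_i\otimes V_{i+1}$.

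The real content is relation (ii). Necessity of the extended reflection equation is immediate: specialising the commutation to $(i,j)=(1,2)$ gives $\pi(t)\,R_1\,\pi(t')\,R_1 = R_1\,\pi(t')\,R_1\,\pi(t)$. For sufficiency, I would induct on $n=\max(i,j)$, assuming without loss of generality $i<j=n$. When $i\leq n-2$, writing $\pi_n(t')=R_{n-1}\pi_{n-1}(t')R_{n-1}$ and using that $R_{n-1}$ commutes with $\pi_i(t)$ (disjoint supports) reduces the claim to the inductive hypothesis. The residual case $i=n-1$ becomes a \emph{shifted} reflection equation
$$\pi_k(t)\,R_k\,\pi_k(t')\,R_k = R_k\,\pi_k(t')\,R_k\,\pi_k(t)\qquad(k\geq 1),$$
which I would establish as a separate sub-lemma by induction on $k$, the base case $k=1$ being exactly the ERE.

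This sub-lemma is the heart of the proof and the main obstacle. In the inductive step, expanding $\pi_k=R_{k-1}\pi_{k-1}R_{k-1}$ on both sides creates a product whose internal block $R_{k-1}R_kR_{k-1}$ is rewritten via YBE as $R_kR_{k-1}R_k$; the outer $R_k$ can then be commuted past $\pi_{k-1}(t)$ by disjoint supports, exposing an inner block $\pi_{k-1}(t)\,R_{k-1}\,\pi_{k-1}(t')\,R_{k-1}$ to which the inductive hypothesis applies, and reversing the braid moves produces the right-hand side. The bookkeeping is delicate -- each YBE move must be paired with the correct support-based commutation -- but mechanical once the pattern is seen. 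Intuitively, the ERE propagates up the tower of $\pi_k$'s precisely because the long braid $R_{k-1}\cdots R_1$ passes through $R_k$ in a controlled way provided by YBE.
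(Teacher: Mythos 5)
Your proposal is correct and follows essentially the same route as the paper: both reduce the claim to verifying the defining relations of the wreath-product presentation (the Coxeter/Yang--Baxter relations, the commutation of the conjugated copies of $T$, and the action relations $R_i\pi_j(t)R_i=\pi_{\sigma_i(j)}(t)$), with necessity obtained by applying $\rho_{\pi,R}$ to commuting elements. The only difference is that where the paper justifies the key commutation relation by a braid-diagram argument, you make it explicit through the double induction on the shifted reflection equation $\pi_k(t)R_k\pi_k(t')R_k=R_k\pi_k(t')R_k\pi_k(t)$, which is a rigorous algebraic rendering of the same computation.
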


\begin{remark}
If $T=\Z / 2 \Z$, then $G=\Z / 2 \Z\wr\sinf$ is the Weyl group of type $B_\infty$. Defining a unitary representation $\pi$ of $T$ on the vector space $W\otimes V$ corresponds to choosing an involutive unitary matrix $K$. Moreover, the couple $(\pi,R)$ satisfies the \textit{extended reflection equation} if and only if $K$ and $R$ verify the reflection equation :
$$ (K\otimes 1)(1\otimes R)(K\otimes 1)(1\otimes R) = (1\otimes R)(K\otimes 1)(1\otimes R)(K\otimes 1) $$
That is the reason why we call it \textit{extended} reflection equation.
\end{remark}

To simplify the proof of this property, we will introduce a geometric way to visualize the operations in the group $G$.

\subsubsection{Geometric representation}
We already know that the Yang-Baxter representation of the symmetric group can be geometrically represented with braids. The Yang-Baxter equation is written as follows :

\vskip 7pt
\begin{center}
\begin{tikzpicture}
\braid[strands=3,braid start={(0,0)}]%
{\sigma_1^{-1} \sigma_2^{-1} \sigma_1^{-1}}
\node[font=\Huge] at (4.5,-1.5) {\(=\)};
\braid[strands=3,braid start={(5,0)}]
{\sigma_2^{-1} \sigma_1^{-1} \sigma_2^{-1}}
\node[font=] at (2,-3.5) {$R_1R_2R_1$};
\node[font=] at (7,-3.5) {$R_2R_1R_2$};
\end{tikzpicture}
\end{center}

\vskip 7pt
To represent the group $G$ with braids, we add a red vertical strand on the left, and for all $t\in T$, we represent the element $\pi(t)$ as follows :

\vskip 7pt 
\begin{center}
\begin{tikzpicture}
\draw[white,double=red, very thick,-] (-1,-1.5) -- (-1,0);
\draw[smooth,white,double=black,line width=1mm,-] plot[variable=\x,domain=-1.5:1.5] ({-1.3*exp(-3.5*\x*\x)},{\x});
\draw[white,double=red, very thick,-] (-1,0) -- (-1,1.5);
\foreach \x in {1,2,3}{
\draw[white,double=black, thick,-] (\x,-1.5) -- (\x,1.5);
}
\node[anchor=south] at (-1.7,-0.3) {$\pi(t)$};
\end{tikzpicture}
\end{center}

Then, the extended reflection equation can be represented by the following statement : for all $t,t'\in T$,
\vskip 0pt
\begin{center}
\scalebox{0.7}{%
\begin{tikzpicture}
\draw[white,double=red, thick,-] (-1,-1.5) -- (-1,0);
\draw[smooth,white,double=black,line width=1mm,-] plot[variable=\x,domain=-1.5:1.5] ({-1.3*exp(-3.5*\x*\x)},{\x});
\draw[white,double=red, thick,-] (-1,0) -- (-1,1.5);
\draw[white,double=black, thick,-] (1,-1.5) -- (1,1.5);
\node[anchor=south] at (-1.8,-0.3) {$\pi(t)$};

\draw[smooth,white,double=black,line width=1mm,-] plot[variable=\x,domain=-2.5:-1.5] ({-1*exp(-10*(\x+1.5)*(\x+1.5))+1},{\x});
\draw[smooth,white,double=black,line width=1mm,-] plot[variable=\x,domain=-2.5:-1.5] ({exp(-10*(\x+1.5)*(\x+1.5))},{\x});
\draw[white,double=red, thick,-] (-1,-2.5) -- (-1,-1.5);
\node[anchor=south] at (-1.8,-4.2) {$\pi(t')$};

\draw[white,double=red, thick,-] (-1,-5.5) -- (-1,-4);
\draw[smooth,white,double=black,line width=1mm,-] plot[variable=\x,domain=-5.5:-2.5] ({-1.3*exp(-3.5*(\x+4)*(\x+4))},{\x});
\draw[white,double=red, thick,-] (-1,-4) -- (-1,-2.5);
\draw[white,double=black, thick,-] (1,-5.5) -- (1,-2.5);
\node[anchor=south] at (3.2,-1.3) {$\pi(t')$};

\draw[smooth,white,double=black,line width=1mm,-] plot[variable=\x,domain=-6.5:-5.5] ({-1*exp(-10*(\x+5.5)*(\x+5.5))+1},{\x});
\draw[smooth,white,double=black,line width=1mm,-] plot[variable=\x,domain=-6.5:-5.5] ({exp(-10*(\x+5.5)*(\x+5.5))},{\x});
\draw[white,double=red, thick,-] (-1,-6.5) -- (-1,-5.5);
\node[font=] at (0,-7) {$\pi(t)R_1\pi(t')R_1$ };
\node[font=] at (5,-7) {$R_1\pi(t')R_1\pi(t)$ };
\node[anchor=south] at (3.2,-5.2) {$\pi(t)$};
\node[font=\Huge] at (2.5,-2.5) {\(=\)};

\draw[smooth,white,double=black,line width=1mm,-] plot[variable=\x,domain=0.5:1.5] ({-1*exp(-10*(\x-1.5)*(\x-1.5))+6},{\x});
\draw[smooth,white,double=black,line width=1mm,-] plot[variable=\x,domain=0.5:1.5] ({exp(-10*(\x-1.5)*(\x-1.5))+5},{\x});
\draw[white,double=red, thick,-] (4,0.5) -- (4,1.5);

\draw[white,double=red, thick,-] (4,-2.5) -- (4,-1);
\draw[smooth,white,double=black,line width=1mm,-] plot[variable=\x,domain=-2.5:0.5] ({-1.3*exp(-3.5*(\x+1)*(\x+1))+5},{\x});
\draw[white,double=red, thick,-] (4,-1) -- (4,0.5);
\draw[white,double=black, thick,-] (6,-2.5) -- (6,0.5);

\draw[smooth,white,double=black,line width=1mm,-] plot[variable=\x,domain=-3.5:-2.5] ({-1*exp(-10*(\x+2.5)*(\x+2.5))+6},{\x});
\draw[smooth,white,double=black,line width=1mm,-] plot[variable=\x,domain=-3.5:-2.5] ({exp(-10*(\x+2.5)*(\x+2.5))+5},{\x});
\draw[white,double=red, thick,-] (4,-3.5) -- (4,-2.5);

\draw[white,double=red, thick,-] (4,-6.5) -- (4,-5);
\draw[smooth,white,double=black,line width=1mm,-] plot[variable=\x,domain=-6.5:-3.5] ({-1.3*exp(-3.5*(\x+5)*(\x+5))+5},{\x});
\draw[white,double=red, thick,-] (4,-5) -- (4,-3.5);
\draw[white,double=black, thick,-] (6,-6.5) -- (6,-3.5);
\end{tikzpicture}}
\end{center}

\begin{remark}
The fact that all $\sigma_i$ are involutive is equivalent to the equality :
\begin{center}
\begin{tikzpicture}
\braid[strands=2,braid start={(0,0)}]%
{\sigma_1 }
\node[font=\Huge] at (4,-0.5) {\(=\)};
\braid[strands=2,braid start={(5,0)}]
{\sigma_1^{-1} }
\end{tikzpicture}
\end{center}
\end{remark}

Thus, we can now write the proof of the Proposition \ref{extendedRE}.

\begin{proof}[Proof of the Proposition \ref{extendedRE}]
    We assume $(\pi,R)$ is a Yang-Baxter couple. Let $\widetilde\sinf=\{1\}\times\sinf\subset G$. The subgroup $\widetilde\sinf$ is isomorphic to $\sinf$. The subrepresentation $\rho_{\pi,R| \widetilde\sinf}=1\otimes\rho_R$ and $\rho_R$ is the Yang-Baxter representation associated to $R$. Thus $R$ satisfies the Yang-Baxter relation. Moreover, let $t,t'\in T$, let $d=(t,1,1,\dots)$ and $d'=(1,t',1,1,\dots)$. Let $g=(d,\id)\in G$ and $g'=(d',\id)\in G$. We have $gg'=g'g$, then $\rho_{\pi,R}(g)\rho_{\pi,R}(g')=\rho_{\pi,R}(g')\rho_{\pi,R}(g)$. Thus, $\pi(t)R_1\pi(t')R_1=R_1\pi(t')R_1\pi(t)$, and the conditions are necessary.
    Let us prove now that they are sufficient. 
    Let $T=\left\langle X|R\right\rangle$ be a presentation of $T$. We obtain the following presentation :
    $$H:=\bigcup_{n\geq1}T^n=\left\langle (X_n)_{n\geq1}|(R_n)_{n\geq1},~~xy=yx,\quad\forall(x,y)\in X_k\times X_\ell,~~k\neq\ell\right\rangle$$
    where for all $n$, $(X_n,R_n)$ is a copy of $(X,R)$.
    $\pi$ being a representation of $T$, the application 
    \begin{equation*}
    \pi\colon\left\{
    \begin{aligned}
        T&\longrightarrow Gl(W\otimes V)\subset Gl(W\otimes V^{\otimes\infty})\\
        g&\longmapsto \pi(g)
    \end{aligned}
    \right.
\end{equation*}
is a group homomorphism. Then if $H'$ is the group with the same presentation as $H$ but without the commutation condition, and $K=\left\langle[X_n,X_k],k\neq n\right\rangle$ such that $H\simeq H'/K$ we define $\rho$ as the following homomorphism :
\begin{equation*}
    \rho\colon\left\{
    \begin{aligned}
        H'&\longrightarrow Gl(W\otimes V)\subset Gl(W\otimes V^{\otimes\infty})\\
        g=(t_i)_i&\longmapsto \prod_{i\in supp(g)} (R_{i-1}\cdots R_1 \pi(t_i) R_1\cdots R_{i-1})
    \end{aligned}
    \right.
\end{equation*}
We shall now prove that $K\subset\ker \rho$. Then by factorizing, we would obtain a group homomorphism $\rho:H\longrightarrow Gl(W\otimes V^{\otimes\infty})$ (we will still call it $\rho$).

\noindent Let $k<n\in\mathbb{N}$. We have to prove that for all $(x,y)\in X_k\times X_n$, $\rho(xy)=\rho(yx)$. By definition of $\rho$ and the $X_i$, it is equivalent to prove that
for all $t,t'\in T$,
\begin{align*}
\left(R_{n-1}\cdots R_1\pi(t)R_1\cdots R_{n-1}\right)&\left(R_{k-1}\cdots R_1\pi(t')R_1\cdots R_{k-1}\right)=\\[3mm]
(R_{k-1}\cdots R_1&\pi(t')R_1\cdots R_{k-1})\left(R_{n-1}\cdots R_1\pi(t)R_1\cdots R_{n-1}\right)    
\end{align*}
But $(\pi,R)$ verifying the Yang-Baxter equation and the extended reflection equation, so we can compute the product in the left hand side to obtain the one in the right hand side. This equality is more obvious with the geometric representation of the operations on the group. Indeed, the left hand side is represented by the following braids : 
\begin{center}
\scalebox{0.8}{%
\tikzset{decorate sep/.style 2 args=
{decorate,decoration={shape backgrounds,shape=circle,shape size=#1,shape sep=#2}}}
\begin{tikzpicture}
\draw[white,double=red,very thick,-] (-5,2) -- (-5,-1);
\foreach \x in {-4.8}{
\draw[white,double=black,very thick,-] (\x,2) -- (\x,-1);
}
\draw[smooth,white,double=black,line width=1mm,-] plot[variable=\x,domain=0:0.4] ({-1.6*exp(-10*(\x-1)*(\x-1))-3.7},{-\x});
\draw[smooth,white,double=black,line width=1mm,-] plot[variable=\x,domain=0.8:1.2] ({-1.6*exp(-10*(\x-1)*(\x-1))-3.7},{-\x});
\draw[smooth,white,double=black,line width=1mm,-] plot[variable=\x,domain=1.6:2] ({-1.6*exp(-10*(\x-1)*(\x-1))-3.7},{-\x});

\draw[smooth,white,double=black, dotted] plot[variable=\x,domain=1.2:1.6] ({-1.6*exp(-10*(\x-1)*(\x-1))-3.7},{-\x});
\draw[smooth,white,double=black, dotted] plot[variable=\x,domain=0.4:0.8] ({-1.6*exp(-10*(\x-1)*(\x-1))-3.7},{-\x});
\node[anchor=south] at (-5.7,0.5) {$\pi(t)$};
\node[anchor=south] at (-5.8,-1.3) {$\pi(t')$};
\node[anchor=south] at (0,2.1) {$n$};
\foreach \x in {-4.8}{
\draw[white,double=black,very thick,-] (\x,-1) -- (\x,-2);
}
\draw[white,double=red,very thick,-] (-5,-1) -- (-5,-2);
\node[anchor=south] at (-4.8,2.1) {$1$};
\node[anchor=south] at (-3.7,2.1) {$k$};
\draw[decorate sep={0.2mm}{1mm},fill] (-4.6,2.3) -- (-4,2.3);
\draw[decorate sep={0.2mm}{2.5mm},fill] (-2.6,2.3) -- (-1,2.3);
\draw[white,double=black,very thick,-] (-3.7,2) -- (-3.7,0);

\draw[smooth,white,double=black,line width=1mm,-] plot[variable=\x,domain=-2:-1.7] ({-5.3*exp(-3*(\x+0.8)*(\x+0.8))},{-\x});
\draw[smooth,white,double=black,line width=1mm,-] plot[variable=\x,domain=-1.2:-0.4] ({-5.3*exp(-3*(\x+0.8)*(\x+0.8))},{-\x});
\draw[smooth,white,double=black,line width=1mm,-] plot[variable=\x,domain=0.1:2] ({-5.3*exp(-3*(\x+0.8)*(\x+0.8))},{-\x});
\draw[smooth,white,double=black, dotted] plot[variable=\x,domain=-1.7:-1.1] ({-5.3*exp(-3*(\x+0.8)*(\x+0.8))},{-\x});
\draw[smooth,white,double=black, dotted] plot[variable=\x,domain=-0.5:0.1] ({-5.3*exp(-3*(\x+0.8)*(\x+0.8))},{-\x});

\draw[white,double=black,very thick,-] (-3.7,0.8) -- (-3.7,0.1);
\draw[white,double=black,very thick,-] (-4.8,0.8) -- (-4.8,0.5);
\draw[white,double=red,very thick,-] (-5,0.8) -- (-5,0.5);

\end{tikzpicture}}
\end{center}

\noindent And the right hand side is represented by the following braids.

\begin{center}
\scalebox{0.8}{%
\tikzset{decorate sep/.style 2 args=
{decorate,decoration={shape backgrounds,shape=circle,shape size=#1,shape sep=#2}}}
\begin{tikzpicture}
\draw[white,double=red,very thick,-] (-5,-2) -- (-5,1);
\foreach \x in {-4.8}{
\draw[white,double=black,very thick,-] (\x,-2) -- (\x,1);
}
\draw[smooth,white,double=black,line width=1mm,-] plot[variable=\x,domain=0:0.4] ({-1.6*exp(-10*(\x-1)*(\x-1))-3.7},{\x});
\draw[smooth,white,double=black,line width=1mm,-] plot[variable=\x,domain=0.8:1.2] ({-1.6*exp(-10*(\x-1)*(\x-1))-3.7},{\x});
\draw[smooth,white,double=black,line width=1mm,-] plot[variable=\x,domain=1.6:2] ({-1.6*exp(-10*(\x-1)*(\x-1))-3.7},{\x});

\node[anchor=south] at (-5.7,-1.1) {$\pi(t)$};
\node[anchor=south] at (-5.8,0.7) {$\pi(t')$};
\draw[smooth,white,double=black, dotted] plot[variable=\x,domain=1.2:1.6] ({-1.6*exp(-10*(\x-1)*(\x-1))-3.7},{\x});
\draw[smooth,white,double=black, dotted] plot[variable=\x,domain=0.4:0.8] ({-1.6*exp(-10*(\x-1)*(\x-1))-3.7},{\x});

\node[anchor=south] at (0,2.1) {$q$};
\foreach \x in {-4.8}{
\draw[white,double=black,very thick,-] (\x,1) -- (\x,2);
}
\draw[white,double=red,very thick,-] (-5,1) -- (-5,2);
\node[anchor=south] at (-4.8,2.1) {$1$};
\node[anchor=south] at (-3.7,2.1) {$p$};
\draw[decorate sep={0.2mm}{1mm},fill] (-4.6,2.3) -- (-4,2.3);
\draw[decorate sep={0.2mm}{2.5mm},fill] (-2.6,2.3) -- (-1,2.3);
\draw[white,double=black,very thick,-] (-3.7,-2) -- (-3.7,0);

\draw[smooth,white,double=black,line width=1mm,-] plot[variable=\x,domain=-2:-1.7] ({-5.3*exp(-3*(\x+0.8)*(\x+0.8))},{\x});
\draw[smooth,white,double=black,line width=1mm,-] plot[variable=\x,domain=-1.2:-0.4] ({-5.3*exp(-3*(\x+0.8)*(\x+0.8))},{\x});
\draw[smooth,white,double=black,line width=1mm,-] plot[variable=\x,domain=0.1:2] ({-5.3*exp(-3*(\x+0.8)*(\x+0.8))},{\x});
\draw[smooth,white,double=black, dotted] plot[variable=\x,domain=-1.7:-1.1] ({-5.3*exp(-3*(\x+0.8)*(\x+0.8))},{\x});
\draw[smooth,white,double=black, dotted] plot[variable=\x,domain=-0.5:0.1] ({-5.3*exp(-3*(\x+0.8)*(\x+0.8))},{\x});

\draw[white,double=black,very thick,-] (-3.7,-0.8) -- (-3.7,-0.1);
\draw[white,double=black,very thick,-] (-4.8,-0.8) -- (-4.8,-0.5);
\draw[white,double=red,very thick,-] (-5,-0.8) -- (-5,-0.5);

\end{tikzpicture}}
\end{center}

So hence the commutativity and $\rho:H\longrightarrow Gl(W\otimes V^{\otimes\infty})$ is a group homomorphism. 
Then $G=H\rtimes\mathfrak{S}_\infty$ and if we have $$\mathfrak{S}_\infty=\left\langle (\sigma_i)_{i\geq1}|\sigma_i\sigma_{i+1}\sigma_i=\sigma_{i+1}\sigma_i\sigma_{i+1},~~ \sigma_i\sigma_j=\sigma_j\sigma_i\quad \forall |i-j|>1\right\rangle=\left\langle Y|S\right\rangle$$
with $Y=(\sigma_i)_i$ and $S$ the relations they have to verify, then 
$$G=\left\langle (X_n)_n,Y|(R_n)_n,S,K,~~ \sigma d \sigma^{-1}=\sigma\cdot d,\quad \forall\sigma\in Y \right\rangle$$
Let $G'$ be the group with the same presentation as $G$ but without the "action relations" (it is basically the free product $H*\mathfrak{S}_\infty$). 

\noindent Let $\rho':G'\longrightarrow Gl(W\otimes V^{\otimes\infty})$ be the homomorphism such that $\rho'_{|H}=\rho$ and $\rho'_{|\mathfrak{S}_\infty}$ is the classical Yang-Baxter representation sending $\sigma_i$ on $1^{\otimes i-1}\otimes R\otimes1\otimes \cdots$. We will call it $\rho$.

\noindent If $L$ is the subgroup generated by the "action relations" it means that $G\simeq G'/L$. To finally obtain the representation of $G$, we shall prove that $L\subset \ker \rho$. This is obvious by construction of $\rho_{|H}$. Indeed, if $d=(1,\dots,1,t)\in T^n$ then if $n<i$ :
$$R_i\rho(d)R_i=R_i\times R_{n-1}\cdots R_1\pi(t)R_1\cdots R_{n-1}\times R_i=\rho(d)=\rho(\sigma_{i}\cdot d)$$
\noindent if $n=i$,
$$R_i\rho(d)R_i=R_i\times R_{i-1}\cdots R_1\pi(t)R_1\cdots R_{i-1}\times R_i=\rho(\sigma_{i}\cdot d)$$
\noindent if $n=i+1$,
$$R_i\rho(d)R_i=R_i\times R_{i}\cdots R_1\pi(t)R_1\cdots R_{i}\times R_i=\rho(\sigma_{i}\cdot d)$$
\noindent if $n\geq i+2$,
$$R_i\rho(d)R_i=R_{n-1}\cdots R_iR_{i+1}R_i\cdots R_1\pi(t)R_1\cdots R_iR_{i+1}R_i\cdots R_{n-1}=\rho(\sigma_{i}\cdot d)$$
Thus, by the Yang-Baxter equation, and because $R_{i+1}$ commutes with $R_j$ for $j\leq i-1$,
$$R_i\rho(d)R_i=R_{n-1}\cdots R_{i+1}R_i\cdots R_1\pi(t)R_1\cdots R_iR_{i+1}\cdots R_{n-1}=\rho(d)=\rho(\sigma_{i}\cdot d)$$
Finally, $L\subset \ker\rho$ and again by factorizing, we can define a homomorphism called $\rho_{\pi,R}:G\longrightarrow Gl(W\otimes V^{\otimes\infty})$. 
\end{proof}

Thus, this construction is canonical and we have defined what is a Yang-Baxter representation for any wreath product $T\wr\mathfrak{S}_\infty$. 
\begin{remark}
    The construction above also stands for a non necessarily finite group $T$.
\end{remark}
  
\subsection{Extremal Yang-Baxter characters of \texorpdfstring{$G$}{G}}
\subsubsection{Conjugacy classes in \texorpdfstring{$G=T\wr\mathfrak{S}_\infty$}{G=TSinf}}
In this section, we present the study of the conjugacy classes of elements of $G$ in the section 2.1 in \cite{hirai}.

Again, we have a standard decomposition for all elements of $G$. 
\begin{definition}
Let $g=(d,\sigma)\in G$ with $d=(d_i)_{i\in\mathbb{N}}\in T^\mathbb{N}$ such that for $n$ large enough, $d_n=e_T$, and $\sigma\in\mathfrak{S}_\infty$. The \textit{support} of $d$ is defined as the set
$supp(d):=\{i\in\mathbb{N}~|~d_i\neq e_T\}$.
The \textit{support} of $\sigma$ $supp(\sigma)$ denotes as usual the set $\{i\in\mathbb{N}~|~\sigma(i)\neq i\}$. The \underline{\textit{support}} of $g$ is defined as follows :
$$supp(g)= supp(d)\cup supp(\sigma)$$
We say that $g=(d,\sigma)$ is \underline{\textit{cyclic}} if $\sigma$ is cyclic and $supp(d)\subset supp(\sigma)$. If $\sigma$ is cyclic, the \underline{\textit{length}} of $\sigma$ $\ell(\sigma)$ is the cardinal of $\supp(\sigma)$. We say that $\xi=(d,\sigma)$ is \underline{\textit{elementary}} if $\sigma=id$ and $supp(d)=\{q\}$ for a $q\in\mathbb{N}$.
\end{definition}

\begin{property}[The standard decomposition]\label{standard}
For an arbitrary element $g=(d,\gamma)\in G$, there exists a decomposition as a product of cyclic elements $g_j=(d_j,\gamma_j)\in G$ and elementary elements $\xi_{q_i}=(d'_i,id)\in G$ with $supp(d'_i)=\{q_i\}$, unique up to the order of the elements such that the supports of the $g_j$ and $\xi_{q_i}$ are disjoint :
$$g=\xi_{q_1}\xi_{q_2}\dots\xi_{q_r}g_1g_2\dots g_s$$
Furthermore, $\gamma=\gamma_1\gamma_2\dots\gamma_s$ is the disjoint cyclic decomposition of $\gamma$. 
\end{property}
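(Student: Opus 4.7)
The plan is to construct the decomposition explicitly from two standard building blocks — the disjoint cycle decomposition of $\gamma$ and the individual nontrivial entries of $d$ — then verify existence by a direct product computation in the wreath product, and finally deduce uniqueness from the uniqueness of the cycle decomposition in $\mathfrak{S}_\infty$.

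First I would write $\gamma = \gamma_1\cdots\gamma_s$ as its essentially unique disjoint cycle decomposition in $\mathfrak{S}_\infty$; this is a finite product since $\gamma$ has finite support. Setting $S_j:=\supp(\gamma_j)$, the $S_j$ are pairwise disjoint and $\supp(\gamma)=\bigsqcup_j S_j$. For each $j$ I would define $d^{(j)}\in T^{\mathbb{N}}$ by $d^{(j)}_i=d_i$ if $i\in S_j$ and $d^{(j)}_i=e_T$ otherwise, and set $g_j:=(d^{(j)},\gamma_j)$; these are cyclic because $\supp(d^{(j)})\subset S_j=\supp(\gamma_j)$. For each $q\in\supp(d)\setminus\supp(\gamma)$, I would define the elementary piece $\xi_q:=(d^{(q)},\id)$, where $d^{(q)}$ equals $d_q$ at position $q$ and $e_T$ elsewhere. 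By construction the supports of the $g_j$ and $\xi_q$ are pairwise disjoint and together partition $\supp(g)$.

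The existence half reduces to checking that the product $\xi_{q_1}\cdots\xi_{q_r}g_1\cdots g_s$ actually equals $(d,\gamma)$. The key observation is that two elements of $G$ with disjoint supports commute: if $(c,\sigma)$ and $(c',\sigma')$ have disjoint supports then $\sigma$ fixes $\supp(c')$ pointwise, so the natural action satisfies $\sigma\cdot c'=c'$ (and symmetrically), whence the semidirect-product law collapses to $(c,\sigma)(c',\sigma')=(cc',\sigma\sigma')$. Iterating, the total product has permutation part $\gamma_1\cdots\gamma_s=\gamma$ and $T^{\mathbb{N}}$-part equal to the componentwise product of all the $d^{(j)}$'s and $d^{(q)}$'s, which equals $d$ because their supports partition $\supp(d)$ and each piece carries the correct value.

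For uniqueness, suppose we have another decomposition $g=\xi'_{1}\cdots\xi'_{r'}g'_1\cdots g'_{s'}$ of the required form. Taking permutation parts and again using disjoint supports, $\gamma=\gamma'_1\cdots\gamma'_{s'}$ is an expression of $\gamma$ as a product of pairwise disjoint cycles; the uniqueness of the cycle decomposition in $\mathfrak{S}_\infty$ forces $\{\gamma'_j\}=\{\gamma_j\}$ as multisets, so after relabelling $\gamma'_j=\gamma_j$ and $\supp(d'_j)\subset S_j$. Reading the $T$-component of the product at an index $i\in S_j$ then gives $(d'_j)_i=d_i$, pinning down $d'_j$, and the analogous reading at $q\in\supp(d)\setminus\supp(\gamma)$ identifies the elementary pieces uniquely. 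The main obstacle I anticipate is the careful bookkeeping of the commutation step — verifying that in $G$ the semidirect product multiplication genuinely degenerates to componentwise multiplication on mutually disjoint-support factors, so that the order in which the $g_j$'s and $\xi_{q_i}$'s appear is irrelevant.
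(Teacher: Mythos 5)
Your proof is correct and is the standard argument: the paper itself states this Property without proof (it is imported from Section 2.1 of \cite{hirai}), and your construction --- splitting $d$ along the disjoint cycle decomposition of $\gamma$ plus the leftover entries of $d$ outside $\supp(\gamma)$, checking that disjoint-support factors multiply componentwise and commute, and reducing uniqueness to that of the cycle decomposition --- is exactly the intended one. The only point worth making fully explicit is the inductive step that after multiplying two disjoint-support factors the result still has support disjoint from the remaining factors, which you acknowledge; otherwise there is nothing to add.
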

We denote by $[t]$ the conjugacy class of $t \in T$, and by $T / \sim$ the set of all conjugacy classes of $T$, and $t \sim t'$ denotes that $t, t' \in T$ are conjugate in $T$. For $g_j=\left(d_j, \sigma_j\right)\in G$ where $\sigma_j$ is a cyclic permutation and $\supp(d_j)\subset\supp\sigma_j$, let $\sigma_j=\left(\begin{array}{llll}i_{j, 1} & i_{j, 2} & \ldots & i_{j, \ell_j}\end{array}\right)$ and put $K_j:=\supp\left(\sigma_j\right)=\left\{i_{j, 1}, i_{j, 2}, \ldots, i_{j, \ell_j}\right\}$ with $\ell_j=\ell\left(\sigma_j\right)$. For $d_j=\left(t_i\right)_{i \in K_j}$, we put
$$
P_{\sigma_j}\left(d_j\right):=\left[t_{\ell_j}^{\prime} t_{\ell_j-1}^{\prime} \cdots t_2^{\prime} t_1^{\prime}\right] \in T / \sim \quad \text { with } \quad t_k^{\prime}=t_{i_{j, k}} \quad\left(1 \leq k \leq \ell_j\right) .
$$

\noindent We recall that for $g=(d,\sigma)\in G$, $\supp(g)=\supp(d)\cup\supp(\sigma)$.

\begin{lemma} 
\begin{itemize}
\item[(i).] Let $\sigma \in \mathfrak{S}_{\infty}$ be a cycle, and put $K=\operatorname{supp}(\sigma)$. Then, an element $g=(d, \sigma) \in G$ with $\operatorname{supp}(d)\subset K$ is conjugate in it to $g^{\prime}=\left(d^{\prime}, \sigma\right) \in$ $G$ with $d^{\prime}=\left(t_i^{\prime}\right)_{i \in K}, t_i^{\prime}=e_T\left(i \neq i_0\right),\left[t_{i_0}^{\prime}\right]=P_\sigma(d)$ for some $i_0 \in K$.
\item[(ii).] Identify $\tau \in \mathfrak{S}_{\infty}$ with its image in $G=T\wr\mathfrak{S}_{\infty}$. Then we have, for $g=(d, \sigma)$,
$$
\tau g \tau^{-1}=\left(\tau(d), \tau \sigma \tau^{-1}\right)=:\left(d^{\prime}, \sigma^{\prime}\right),
$$
\noindent and $P_{\sigma^{\prime}}\left(d^{\prime}\right)=P_\sigma(d)$.
\end{itemize}
\end{lemma}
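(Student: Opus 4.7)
The plan is to settle (ii) by a one-line computation in the semidirect product, and then to reduce (i) to an explicit finite recursion over the $\ell$ positions of the cycle $\sigma$, obtained by conjugating $g$ by a well-chosen element of $T^{\mathbb{N}}\subset G$.

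For (ii), using the multiplication rule $(a,\alpha)(b,\beta)=(a\cdot\alpha(b),\alpha\beta)$ together with the identification of $\tau$ with $(\mathbf{1},\tau)\in G$, a direct expansion yields $\tau g\tau^{-1}=(\tau(d),\tau\sigma\tau^{-1})$. The cycle $\sigma'=\tau\sigma\tau^{-1}$ equals $(\tau(i_1)\,\tau(i_2)\,\cdots\,\tau(i_\ell))$, and by the definition of the permutation action one has $\tau(d)_{\tau(i_k)}=d_{i_k}=t_k$. Hence the defining product $t'_\ell\cdots t'_1$ computing $P_{\sigma'}(\tau(d))$ coincides term by term with that computing $P_\sigma(d)$, so the two conjugacy classes are equal.

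For (i), I would write $\sigma=(i_1\,i_2\,\cdots\,i_\ell)$, $t_k:=d_{i_k}$, and conjugate $g$ by an element $(e,\id)\in G$ with $e\in T^{\mathbb{N}}$ supported on $K$; set $s_k:=e_{i_k}$. The semidirect-product rules give
\[
(e,\id)(d,\sigma)(e,\id)^{-1}=(e\,d\,\sigma(e^{-1}),\sigma),
\]
and a short computation shows that on $K$ the new tuple takes the value $s_k\,t_k\,s_{k-1}^{-1}$ at position $i_k$, with indices read cyclically modulo $\ell$. I would then fix $s_1:=e_T$ and define $s_2,\ldots,s_\ell$ by the forward recursion $s_k:=s_{k-1}t_k^{-1}$, which forces the new entry at $i_k$ to be $e_T$ for every $k\in\{2,\ldots,\ell\}$. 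The single surviving entry, at $i_1$, then equals $s_1\,t_1\,s_\ell^{-1}=t_1 t_\ell t_{\ell-1}\cdots t_2$, which is a cyclic permutation of $t_\ell t_{\ell-1}\cdots t_1$ and hence lies in the conjugacy class $P_\sigma(d)$; this is exactly the conclusion of (i) with $i_0=i_1$.

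The only real subtlety — and the point where I would be most careful — is the bookkeeping of conventions: the direction of the action $\sigma\cdot d$ of $\mathfrak{S}_\infty$ on $T^{\mathbb{N}}$ (which determines whether the entry at $i_k$ acquires a factor $s_{k-1}^{-1}$ or $s_{k+1}^{-1}$), and the cyclic indexing that ensures the surviving entry falls at $i_1$ with the correct product of $t_k$'s representing $P_\sigma(d)$. Once the recursion $s_k=s_{k-1}t_k^{-1}$ is set up correctly, both parts of the lemma drop out of a single pair of computations.
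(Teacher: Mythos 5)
Your proposal is correct and follows essentially the same route as the paper: part (ii) is the same one-line semidirect-product computation, and part (i) is the same conjugation by an element of $T^{\mathbb{N}}$ supported on $K$, yielding the entries $s_k t_k s_{k-1}^{-1}$ and a telescoping choice of the $s_k$. The only cosmetic difference is your normalization $s_1=e_T$ (landing the surviving entry at $i_1$ as the cyclic permutation $t_1 t_\ell\cdots t_2$ of $t_\ell\cdots t_1$) versus the paper's $s_\ell=e_T$ (landing it at $i_\ell$ as $t_\ell\cdots t_1$ exactly); both give the same conjugacy class, as required.
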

\begin{proof} 
\begin{itemize}
\item[(i).] We may assume that $\sigma=\left(\begin{array}{llll}1 & 2 & \cdots & \ell\end{array}\right)$ and so $K=\boldsymbol{I}_{\ell}=$ $\{1,2, \ldots, \ell\}$. Then, for $s=\left(s_1, s_2, \ldots, s_{\ell}\right) \in T^\ell \hookrightarrow G$, we have $s g s^{-1}=\left(d^{\prime \prime}, \sigma\right)$ with $d^{\prime \prime}=\left(t_i^{\prime \prime}\right)_{i \in K}$,
$$
t_i^{\prime \prime}=s_i t_i\left(s_{\sigma^{-1}(i)}\right)^{-1}=s_i t_i\left(s_{i-1}\right)^{-1} \quad(1 \leq i \leq \ell, 0 \equiv \ell) .
$$

\noindent Therefore $t_{\ell}^{\prime \prime} t_{\ell-1}^{\prime \prime} \cdots t_2^{\prime \prime} t_1^{\prime \prime}=s_{\ell}\left(t_{\ell} t_{\ell-1} \cdots t_2 t_1\right) s_{\ell}^{-1}$, and so $P_\sigma\left(d^{\prime \prime}\right)=P_\sigma(d)$.
Take $s_{\ell}=e_T, s_1=t_1^{-1}, s_2=\left(t_2 t_1\right)^{-1}, \ldots, s_{\ell-1}=\left(t_{\ell-1} \cdots t_2 t_1\right)^{-1}$, then we get $t_i^{\prime \prime}=e_T(1 \leq i<\ell)$ and $t_{\ell}^{\prime \prime}=t_{\ell} t_{\ell-1} \cdots t_2 t_1$.
\item[(ii).] With $\sigma$ above, 
we have $\tau \sigma \tau^{-1}=(\tau(1) \quad \tau(2) \quad \cdots \quad \tau(\ell))$, 
and $d^{\prime}=$
$\tau(d)=\left(t_j^{\prime}\right)_{j \in K^{\prime}}, K^{\prime}=\tau(K)$, 
with $t_j^{\prime}=t_{\tau^{-1}(j)}$ and so $t_{\tau(i)}^{\prime}=t_i(i \in K)$. 
Hence $t_{\tau(\ell)}^{\prime} t_{\tau(\ell-1)}^{\prime} \cdots t_{\tau(2)}^{\prime} 
t_{\tau(1)}^{\prime}=t_{\ell} t_{\ell-1} \cdots t_2 t_1$. \\
This proves the assertion.
\end{itemize}
\end{proof}

Applying this lemma to each component $g_j=\left(d_j, \sigma_j\right)$ in the standard decomposition of $g \in G$, we get the following characterization of the conjugacy classes of $G$.

\begin{theorem} Let $T$ be a finite group. Take an element $g \in G=T\wr\sinf$ and let its standard decomposition be
$$
g=\xi_{q_1} \xi_{q_2} \cdots 
\xi_{q_r} g_1 g_2 \cdots g_m
$$
\noindent with $\xi_{q_k}=\left(t_{q_k},1\right)$, and $g_j=\left(d_j, \sigma_j\right)$, $\sigma_j$ cyclic, $\supp\left(d_j\right) \subset$ $\supp\left(\sigma_j\right)$. \\
Then the conjugacy class of $g$ is determined by
$\left[t_{q_k}\right] \in T / \sim(1 \leq k \leq r)$ and $\left(P_{\sigma_j}\left(d_j\right), \ell\left(\sigma_j\right)\right)(1 \leq j \leq m)$,
where $P_{\sigma_j}\left(d_j\right) \in T / \sim$ and $\ell\left(\sigma_j\right) \geq 2$.
\end{theorem}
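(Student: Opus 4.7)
The plan is to combine the preceding lemma with the uniqueness (up to order) of the standard decomposition from Proposition \ref{standard} to show two complementary facts: (a) the invariants $\{[t_{q_k}]\}_k$ and $\{(P_{\sigma_j}(d_j),\ell(\sigma_j))\}_j$ are constant on conjugacy classes, and (b) any two elements sharing these invariants are conjugate. Both directions rest on decomposing a conjugation in $G = H\rtimes\mathfrak{S}_\infty$ as a permutation-conjugation followed by an $H$-conjugation.

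For well-definedness, I would take an arbitrary $h=(d',\id)(e,\tau)\in G$ and compute $hgh^{-1}$. Permutation conjugation by $\tau$ relabels supports and sends each cyclic component $g_j=(d_j,\sigma_j)$ to $(\tau(d_j),\tau\sigma_j\tau^{-1})$; by part (ii) of the lemma this preserves $P_{\sigma_j}(d_j)$, and it clearly preserves $\ell(\sigma_j)$ and the conjugacy class $[t_{q_k}]$ of each elementary part. Conjugation by $(d',\id)\in H$ acts on each cyclic piece by the explicit calculation in part (i) of the lemma, which gave $P_\sigma(d'')=P_\sigma(d)$; on each elementary piece it acts as $T$-conjugation at the position $q_k$, so $[t_{q_k}]$ is unchanged. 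Uniqueness of the standard decomposition ensures these two operations preserve the \emph{multisets} of data, which is what is claimed.

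For the converse, suppose $g$ and $g'$ have matching invariants. I would first produce $\tau\in\mathfrak{S}_\infty$ aligning the standard decompositions: since all supports involved are finite, $\tau$ can be chosen to send the support of the $j$-th cyclic part of $g$ onto that of the $j$-th cyclic part of $g'$ (paired so that the cycles $\sigma_j$ and $\sigma'_j$ of equal length match), and to send each elementary position $q_k$ to $q'_k$. Using part (ii) of the lemma, $\tau g\tau^{-1}$ then shares supports with $g'$ piece by piece, while still carrying the same $P_{\sigma_j}$-classes and elementary classes. Next, I would apply part (i) of the lemma independently on each cyclic support to bring both $\tau g\tau^{-1}$ and $g'$ to the common normal form whose only nonzero entry (at a chosen index) represents the class $P_{\sigma_j}(d_j)$; similarly, within each singleton elementary support, a $T$-conjugation at position $q'_k$ identifies the two representatives of $[t_{q_k}]$. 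Because all these supports are pairwise disjoint, the conjugating elements from the different pieces can be combined into a single element of $H\subset G$, yielding a global conjugator between $g$ and $g'$.

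The principal obstacle I anticipate is not any deep algebra but the bookkeeping of combining these local conjugations into a single global element without interference. I expect this to reduce to the observation that $H=\bigcup_n T^n$ is the direct limit of the $T^{\supp}$ pieces, so independent conjugations on disjoint supports assemble into one element of $H$, and that the permutation $\tau$ aligning supports can be chosen once and for all before these $H$-conjugations are applied. This bookkeeping is clean enough that the theorem follows directly from the lemma applied component by component.
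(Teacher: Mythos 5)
Your proposal is correct and follows essentially the same route as the paper, which simply applies the preceding lemma to each component of the standard decomposition; you have merely spelled out the two directions (invariance under conjugation, and construction of a conjugator from matching data) that the paper leaves implicit. The points you flag --- disjointness of supports letting the local conjugators assemble into one element of $H$, and uniqueness of the standard decomposition guaranteeing the multisets of invariants are well defined --- are exactly the right bookkeeping and present no difficulty.
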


\subsubsection{Characterization of extremal Yang-Baxter characters of \texorpdfstring{$G$}{G}}

\begin{theorem}[Theorem 12, \cite{hirai}]
A character $\chi$ of $G$ is extremal if and only if for all $g,g'\in G$ with disjoint supports, 
$$\chi(gg')=\chi(g)\chi(g')$$
\end{theorem}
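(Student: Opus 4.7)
The plan is to invoke the factor characterization recalled earlier (a character $\chi$ is extremal iff $M := \pi_\chi(G)''$ is a type II$_1$ factor) and bridge it to disjoint-support multiplicativity via averaging. Throughout I write $\tau$ for the canonical tracial state on $M$ induced by $\chi$, so $\tau(\pi_\chi(g)) = \chi(g)$. For $n \in \mathbb{N}$, let $G^{(n)} \subset G$ be the subgroup of elements supported in $\{n+1, n+2, \ldots\}$ and set $M_n := \pi_\chi(G^{(n)})''$. Since any $g$ with $\supp(g) \subset \{1, \ldots, n\}$ commutes with every element of $G^{(n)}$, one has $\pi_\chi(g) \in M_n'$, whence $M_\infty := \bigcap_n M_n \subset \pi_\chi(G)' \cap M = Z(M)$.

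\textbf{Forward direction.} Assume $\chi$ is extremal, so $Z(M) = \mathbb{C}$ and $M_\infty = \mathbb{C}$. Given $g, g'$ with disjoint supports, centrality of $\chi$ allows conjugating $g'$ so that (without changing $\chi$-values) $\supp(g) \subset \{1, \ldots, n_0\}$ and $\supp(g') \subset \{n_0+1, n_0+2, \ldots\}$. Pick permutations $\tau_k$ fixing $\{1,\ldots,n_0\}$ pointwise with pairwise disjoint $\tau_k(\supp(g'))$, and set $h_k := \tau_k g' \tau_k^{-1} \in G^{(n_0)}$. Centrality and $\tau_k g \tau_k^{-1} = g$ give $\chi(g h_k) = \chi(gg')$ for every $k$, and $\tau(\pi_\chi(h_k)) = \chi(g')$. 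Triviality of $M_\infty$ combined with Dixmier averaging in the factor $M$ (equivalently, a tail-martingale argument) yields the $L^2$-convergence $C_N := \frac{1}{N}\sum_{k=1}^N \pi_\chi(h_k) \to \chi(g') \cdot 1$. Passing to the limit in $\tau(\pi_\chi(g) C_N) = \chi(gg')$ gives $\chi(gg') = \chi(g)\chi(g')$.

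\textbf{Reverse direction.} Assume $\chi$ satisfies multiplicativity on disjoint supports. I show $Z(M) = \mathbb{C}$. For $g \in G$, pick permutations $\gamma_k$ with pairwise disjoint $\gamma_k(\supp(g))$, producing pairwise commuting unitaries $u_k := \pi_\chi(\gamma_k g \gamma_k^{-1})$ with common trace $\chi(g)$. For $k \neq l$, the product $(\gamma_k g^{-1}\gamma_k^{-1})(\gamma_l g \gamma_l^{-1})$ has disjoint-support factors, so multiplicativity gives $\tau(u_k^* u_l) = |\chi(g)|^2$; hence $y_k := u_k - \chi(g)\cdot 1$ satisfy $\tau(y_k^* y_l) = 0$ for $k \neq l$, and
\[ \Bigl\| \tfrac{1}{N}\sum_{k=1}^N u_k - \chi(g) \cdot 1 \Bigr\|_2^2 = \frac{1 - |\chi(g)|^2}{N} \xrightarrow{N \to \infty} 0.\]
Thus $\chi(g) \cdot 1$ lies in the $L^2$-closed convex hull of the $G$-conjugates of $\pi_\chi(g)$. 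By linearity and density of $\pi_\chi(\mathbb{C}[G])$ in $M$, this extends to a Dixmier averaging property on all of $M$: every $x \in M$ has $\tau(x) \cdot 1$ in the $L^2$-closed convex hull of its $G$-conjugates. Any $z \in Z(M)$ is invariant under such conjugations, so $z = \tau(z) \cdot 1$, proving $M$ is a factor and $\chi$ is extremal.

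\textbf{Main obstacle.} The delicate step is the $L^2$-convergence of the averages $C_N$ in the forward direction: one cannot yet use multiplicativity to compute $\tau(\pi_\chi(h_k)^* \pi_\chi(h_l))$, so the convergence must be extracted abstractly from $M_\infty = \mathbb{C}$ via martingale convergence (or an equivalent Powers-type argument). The reverse direction is comparatively direct, relying on the explicit orthogonality computation from the hypothesis.
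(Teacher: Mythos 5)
The paper states this theorem without proof (it is the wreath-product analogue of Thoma's multiplicativity criterion, imported from the literature), so there is no in-paper argument to compare against; I assess your proposal on its own terms. Your reverse direction is essentially correct and complete: the computation $\tau(y_k^* y_l)=0$ for $k\neq l$ and the resulting $L^2$-convergence $\frac{1}{N}\sum_k u_k\to\chi(g)\cdot 1$ are exactly right. The one imprecision is the claim that the averaging property ``extends by linearity and density'' to all of $M$ — closed convex hulls of conjugates do not pass through linear combinations. You do not need that extension: for $z\in Z(M)$, traciality and centrality give $\tau(z\,\pi_\chi(g))=\tau(z u_k)=\lim_N\tau(z C_N)=\tau(z)\chi(g)$ for every $g\in G$, and faithfulness of $\tau$ together with weak density of the span of $\pi_\chi(G)$ then forces $z=\tau(z)\cdot 1$.

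The genuine gap is the one you flag yourself: the convergence $C_N=\frac{1}{N}\sum_k\pi_\chi(h_k)\to\chi(g')\cdot 1$ in the forward direction is not delivered by an appeal to ``Dixmier averaging'' or ``martingale convergence''. Dixmier averaging concerns norm-closed convex hulls of unitary conjugates of a fixed element, not Cesàro means along a prescribed sequence of group conjugates, and a martingale argument would require constructing and controlling the conditional expectations onto the $M_n$, which you have not done; triviality of $M_\infty$ alone does not yield convergence of these particular averages. The correct mechanism is asymptotic centrality: for any fixed $h\in G$, the finite set $\supp(h)$ meets the pairwise disjoint sets $\supp(h_k)$ for only finitely many $k$, and disjointly supported elements of $G$ commute, so $\|[C_N,\pi_\chi(h)]\|\to 0$. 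Hence every weak operator cluster point $C$ of the bounded sequence $(C_N)$ lies in $\pi_\chi(G)'\cap M=Z(M)=\mathbb{C}1$, and $\tau(C)=\lim_N\tau(C_N)=\chi(g')$ forces $C=\chi(g')\cdot 1$; since all cluster points coincide, $C_N\to\chi(g')\cdot 1$ in the weak operator topology, which suffices to pass to the limit in $\chi(gg')=\tau(\pi_\chi(g)C_N)$. With this commutator estimate substituted for the vague appeal, your argument closes and the overall two-direction strategy is sound.
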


Let $T$ be a finite group. Let $\hat{T}$ be the set of the equivalence classes of its irreducible characters. We identify the equivalence classes with one of their representative. In the article \cite{hirai}, the authors introduce the following set $\mathbb{P}$ of families of parameters :
$$a_{\zeta,\varepsilon}=(a_{\zeta,\varepsilon,i})_{i\in\mathbb{N}}\quad \text{and}\quad \mu_\zeta,\quad\text{for all }\zeta\in\hat{T},\varepsilon\in\{0,1\}$$
such that for all $\zeta\in\hat{T},\varepsilon\in\{0,1\}$,
$$a_{\zeta,\varepsilon,0}\geq a_{\zeta,\varepsilon,1}\geq\dots\geq0,\quad\text{and}\quad \mu_\zeta\geq 0$$
and
$$ \sum_{{\underset{\varepsilon\in\{0,1\}}{\zeta\in\hat{T}}}}
\|a_{\zeta,\varepsilon}\|+\sum_{\zeta\in\hat{T}}\mu_\zeta\leq1$$
\noindent We define, for $\varepsilon\in\{0,1\}$, and $\zeta\in\hat{T}$ the characters $\chi_\varepsilon\colon \sigma\longmapsto  \text{sgn}(\sigma)^\varepsilon$ on $\mathfrak{S}_\infty$ and $\chi_\zeta\colon t\longmapsto \tr(\zeta(t))$ on $T$. For instance, in the trivial case $t=e_T$, we obtain $\chi_\zeta(t)=\tr(\id)=\dim\zeta$.

\begin{theorem}[Theorem 2, \cite{hirai}]\label{hirai+}
    $\mathbb{P}$ is in bijection with the set of all extremal characters of G. To a family of parameters $(a_{\zeta,\varepsilon})_{\zeta\in\hat{T},\varepsilon\in\{0,1\}},(\mu_\zeta)_{\zeta\in\hat{T}}$, we associate the character $f$ such that for all $g\in G$ with a standard decomposition such as in the Proposition \ref{standard}, we have :
    \begin{align*}
        f(g)=\prod_{1\leq k\leq r}&\left\{\sum_{\zeta\in\hat{T}}\left( \sum_{\varepsilon\in\{0,1\}}\sum_{i\in\mathbb{N}}\frac{a_{\zeta,\varepsilon,i}}{\dim\zeta}+\frac{\mu_\zeta}{\dim \zeta}
 \right)\chi_\zeta(t_{q_k})\right\}\\[3mm]
 \times\prod_{1\leq j\leq m}&\left\{\sum_{\zeta\in\hat{T}}\left( \sum_{\varepsilon\in\{0,1\}}\sum_{i\in\mathbb{N}}\left(\frac{a_{\zeta,\varepsilon,i}}{\dim\zeta}\right)^{\ell(\sigma_j)}\chi_\varepsilon(\sigma_j)
 \right)\chi_\zeta(P_{\sigma_j}(d_j))
 \right\}
    \end{align*}
\end{theorem}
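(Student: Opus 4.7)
The plan is to adapt Thoma's classical argument for $\mathfrak{S}_\infty$ to the wreath product setting, leveraging the standard decomposition of Proposition \ref{standard}. By the extremality criterion stated just above, the character $f$ factorises multiplicatively across that decomposition: for $g = \xi_{q_1}\cdots\xi_{q_r}g_1\cdots g_m$ one has $f(g) = \prod_{k} f(\xi_{q_k})\,\prod_{j} f(g_j)$. It therefore suffices to pin down the value of $f$ on cyclic and elementary elements, and to check conversely that every parameter family in $\mathbb{P}$ produces a positive-definite central function on $G$.

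On an elementary element $\xi_q = (t_q,\id)$ with $\supp(t_q) = \{q\}$, centrality of $f$ under conjugation by $\mathfrak{S}_\infty$ (which permutes the coordinate $q$) shows that $t \mapsto f(\xi_q)$ is independent of $q$ and is a positive-definite class function on $T$, hence expands as $\sum_{\zeta\in\hat{T}} \lambda_\zeta\,\chi_\zeta(t_q)/\dim\zeta$ with $\lambda_\zeta\geq 0$. On a cyclic element $g_j = (d_j,\sigma_j)$ of length $\ell$, the preceding lemma reduces $f(g_j)$ to a function of $(P_{\sigma_j}(d_j),\ell)$, which can likewise be expanded over $\hat{T}$ with coefficients $F_\zeta(\ell,\sigma_j)$ depending only on $\ell$, $\zeta$ and the sign of $\sigma_j$. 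Matching the theorem's formula then amounts to showing $\lambda_\zeta = \mu_\zeta + \sum_{\varepsilon,i} a_{\zeta,\varepsilon,i}$ and $F_\zeta(\ell,\sigma_j) = \sum_{\varepsilon,i}(a_{\zeta,\varepsilon,i}/\dim\zeta)^{\ell}\,\chi_\varepsilon(\sigma_j)$.

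To obtain this precise shape, I would pass to the finite truncations $T\wr\mathfrak{S}_n$, whose irreducible representations are parametrised by tuples $(\lambda^{(\zeta)})_{\zeta\in\hat{T}}$ of Young diagrams satisfying $\sum_\zeta (\dim\zeta)|\lambda^{(\zeta)}| = n$, and whose character values on cyclic elements are computable via a coloured Murnaghan--Nakayama rule. Decomposing the restriction $f_{|T\wr\mathfrak{S}_n}$ as a convex combination of normalised irreducible characters and then applying a Vershik--Kerov-type asymptotic argument on the associated branching graph (run independently for each colour $\zeta$), the rescaled row and column lengths of each $\lambda^{(\zeta)}$ converge to the parameters $a_{\zeta,0,i}$ and $a_{\zeta,1,i}$ respectively, while the mass not captured by any row or column furnishes $\mu_\zeta$. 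Substituting these asymptotics back into the Murnaghan--Nakayama formula delivers the announced power sums.

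The main obstacle is precisely this asymptotic step: one must both establish the existence of such limits (a wreath analogue of the Vershik--Kerov description of the Thoma simplex) and, in the converse direction, show that every family in $\mathbb{P}$ actually arises this way. For the converse, a concrete alternative is to build an explicit factor representation by combining Thoma's GNS representations of $\mathfrak{S}_\infty$ attached to each $\zeta\in\hat{T}$ with the natural action of $T$ on the corresponding isotypic blocks, and then to verify directly that the induced normalised trace reproduces the claimed formula on standard decompositions, with extremality following from the factoriality of the construction.
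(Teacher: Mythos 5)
First, note that the paper does not prove this statement at all: Theorem~\ref{hirai+} is quoted verbatim as Theorem~2 of \cite{hirai} and is used as an external input, so there is no internal proof to compare yours against. Judged on its own terms, your proposal correctly identifies the standard skeleton of such a classification: multiplicativity of an extremal character over the standard decomposition of Proposition~\ref{standard}, reduction to elementary and cyclic elements, expansion of the resulting positive-definite class functions on $T$ over $\hat{T}$, and then a determination of the coefficients $\lambda_\zeta$ and $F_\zeta(\ell,\sigma_j)$. Up to that point the outline is sound (one small correction: the irreducible representations of $T\wr\mathfrak{S}_n$ are indexed by tuples $(\lambda^{(\zeta)})_{\zeta\in\hat{T}}$ with $\sum_\zeta |\lambda^{(\zeta)}|=n$, not $\sum_\zeta (\dim\zeta)\,|\lambda^{(\zeta)}|=n$; the factors $\dim\zeta$ enter the dimension formula and the asymptotic frequencies, not the size constraint).

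The genuine gap is the one you yourself flag: everything that makes the theorem true is concentrated in the ``asymptotic step,'' and you only name it. Concretely, you must (a) prove that for an extremal $f$ the weights of $f_{|T\wr\mathfrak{S}_n}$ concentrate on diagrams whose rescaled rows and columns converge, colour by colour, to limits $a_{\zeta,0,i}/\dim\zeta$ and $a_{\zeta,1,i}/\dim\zeta$ with a residual mass $\mu_\zeta$ (the wreath analogue of the Vershik--Kerov ergodic theorem, including the needed coloured Murnaghan--Nakayama estimates and the uniform control that lets you pass the limit inside the character formula); and (b) prove the converse, that every family in $\mathbb{P}$ is attained and yields an \emph{extremal} character, for which the sketched GNS/factor construction must actually be built and its trace computed. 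Neither step is routine --- (a) is the substance of Thoma's theorem and its wreath generalisation, and (b) requires verifying factoriality, not just positive-definiteness. As written, the proposal is a correct research programme identifying the right intermediate statements, but it does not constitute a proof of the theorem; each of the two bullet points above is a theorem-sized task that the argument presently assumes.
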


This theorem can be seen as a generalization of the bijection described in the first section in the case of $\mathfrak{S}_\infty$. It is the main result of this paper and is proved in the next section.

\begin{theorem}\label{final}
    A family of parameters in $\mathbb{P}$ is associated to a Yang-Baxter extremal character by the bijection described in the Theorem \ref{hirai+} if and only if 
    \begin{itemize}
        \item Finitely many $a_{\zeta,\varepsilon,i} $ are non-zero.
        \item For all $\zeta\in\hat{T}$, $\mu_\zeta=0$
        \item $\sum_{\zeta,\varepsilon} \|a_{\zeta,\varepsilon}\|=1$
        \item For all $\zeta\in\hat{T}$, for all $\varepsilon\in\{0,1\}$, for all $i$, $a_{\zeta,\varepsilon,i} \in \mathbb{Q}_{\geq0}$
    \end{itemize}
\end{theorem}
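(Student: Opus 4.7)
The plan is to pair the Hirai parametrisation (Theorem \ref{hirai+}) with the Lechner-Pennig-Wood characterisation (Theorem \ref{thoma}) by restriction to $\mathfrak{S}_\infty$. The bridge is the observation that for $g=(1,c_n)$ the Hirai formula reads
$$\chi_{\pi,R}((1,c_n))=\sum_{\zeta\in\hat{T},i}\dim\zeta\left(\frac{a_{\zeta,0,i}}{\dim\zeta}\right)^n+(-1)^{n-1}\sum_{\zeta\in\hat{T},i}\dim\zeta\left(\frac{a_{\zeta,1,i}}{\dim\zeta}\right)^n,$$
so the Thoma parameters of $\chi_{\pi,R}|_{\mathfrak{S}_\infty}$ are precisely the multisets $\{a_{\zeta,\varepsilon,i}/\dim\zeta\}$, each value being repeated $\dim\zeta$ times.

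For the necessity direction, since $\rho_{\pi,R}((1,\sigma))=1_W\otimes\rho_R(\sigma)$, the restriction $\chi_{\pi,R}|_{\mathfrak{S}_\infty}$ equals the Yang-Baxter character $\chi_R$ of $\mathfrak{S}_\infty$, and by Theorem \ref{thoma} its Thoma parameters lie in $\mathbb{T}_{\text{YB}}$. The three defining conditions of $\mathbb{T}_{\text{YB}}$ translate immediately: finite support gives finiteness of the non-zero $a_{\zeta,\varepsilon,i}$; the common-denominator condition gives $a_{\zeta,\varepsilon,i}\in\mathbb{Q}_{\geq 0}$; and the normalisation $\sum_k(\alpha_k+\beta_k)=1$ becomes $\sum_{\zeta,\varepsilon,i}a_{\zeta,\varepsilon,i}=1$. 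The inequality $\sum\|a\|+\sum\mu_\zeta\leq 1$ from the definition of $\mathbb{P}$, together with $\mu_\zeta\geq 0$, then forces $\mu_\zeta=0$ for every $\zeta$.

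For the sufficiency direction, given parameters satisfying the four conditions, choose a common denominator $D$ so that every $n_{\zeta,\varepsilon,i}:=Da_{\zeta,\varepsilon,i}/\dim\zeta\in\mathbb{N}$. Take $W=\mathbb{C}$, $V_{\zeta,\varepsilon,i}:=V_\zeta\otimes U_{\zeta,\varepsilon,i}$ with $\dim U_{\zeta,\varepsilon,i}=n_{\zeta,\varepsilon,i}$, $V:=\bigoplus V_{\zeta,\varepsilon,i}$ (of dimension $D$), and $\pi:=\bigoplus(\zeta\otimes 1_{U_{\zeta,\varepsilon,i}})$. On each building block set $R_{\zeta,\varepsilon,i}:=(-1)^\varepsilon F_{V_\zeta}\otimes 1_{U_{\zeta,\varepsilon,i}^{\otimes 2}}$; one checks directly that $(\zeta\otimes 1_U,(-1)^\varepsilon F_{V_\zeta}\otimes 1_U)$ is a Yang-Baxter couple (the reflection equation reduces to $F_{V_\zeta}\zeta(t)_1F_{V_\zeta}=\zeta(t)_2$, which commutes with $\zeta(t')_1$) with Thoma parameters $(1/\dim\zeta)$ of multiplicity $\dim\zeta$. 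Assemble these blocks by the $\boxplus$ construction of LPW, so that $R$ acts as $R_j$ on each diagonal block $V_j^{\otimes 2}$ and as the cross-flip on mixed blocks; by LPW Prop.\ 4.3--4.6 this is an $R$-matrix whose Thoma multisets combine to exactly $\{a_{\zeta,\varepsilon,i}/\dim\zeta\}$ with multiplicities $\dim\zeta$.

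The remaining step, which is the main computational obstacle, is to verify that $(\pi,R)$ is a Yang-Baxter couple and that its character realises the prescribed Hirai parameters. The extended reflection equation extends from blocks to the whole $V$ because on each mixed block the cross-flip yields $R_1\pi(t)_1R_1=\pi(t)_2$, which commutes with $\pi(t')_1$. For the character at a cyclic element $g=(d,c_n)$ with $d=(e_T,\dots,e_T,t)$ one uses $\tau(\rho(g))=\tau(\pi(t)R_1\cdots R_{n-1})$ and observes that only the monochromatic tensor blocks $V_j^{\otimes n}$ contribute to the trace, since the cyclic shift $R_1\cdots R_{n-1}$ permutes ``colours''; on such a block the trace factors as $(-1)^{(n-1)\varepsilon}\chi_\zeta(t)(\dim U_{\zeta,\varepsilon,i})^n$, and after normalisation by $D^n$ this yields exactly Hirai's formula. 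Extremality is automatic since $W=\mathbb{C}$ forces disjoint-support multiplicativity of $\tau\circ\rho_{\pi,R}$.
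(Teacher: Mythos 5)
Your proposal is correct and follows essentially the same route as the paper: necessity by restricting to $\mathfrak{S}_\infty$ and applying Theorem \ref{thoma} to the Thoma parameters $\{a_{\zeta,\varepsilon,i}/\dim\zeta\}$ taken with multiplicity $\dim\zeta$, with the normalisation forcing $\mu_\zeta=0$. The sufficiency construction is also the paper's: $W=\mathbb{C}$, $V=\bigoplus V_\zeta\otimes U_{\zeta,\varepsilon,i}$, $\pi=\bigoplus \zeta\otimes 1$, $R=\boxplus_{\zeta,\varepsilon,i}(-1)^\varepsilon F_{V_\zeta}\otimes 1$, with the key identity $R(\pi(t)\otimes 1)R=1\otimes\pi(t)$ and the same monochromatic-block trace computation recovering Hirai's formula.
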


\section{Proof of the Theorem \ref{final}}
\subsection{The necessary condition}

 The necessary condition is an application of the Theorem \ref{thoma}. Let $(\pi,R)$ be an extremal Yang-Baxter couple, and $\chi$ the extremal Yang-Baxter character associated to this couple. We write $\sigma$ instead of $(1,\sigma)\in G$. We recall that $c_n=\sigma_{1}\cdots\sigma_{n-1}$. By construction, $\chi_{|\mathfrak{S}_\infty}$ is an (extremal) Yang-Baxter character. However,
    $$\chi(c_n)=\sum_{\zeta\in\hat{T}} \sum_{\varepsilon\in\{0,1\}}\sum_{i\in\mathbb{N}}\left(\frac{a_{\zeta,\varepsilon,i}}{\dim\zeta}\right)^{n}\chi_\varepsilon(c_n)(\dim\zeta)$$
    \noindent We can write it as :
    $$\chi(c_n)=\sum_{\zeta\in\hat{T}} \sum_{i\in\mathbb{N}}\left(\frac{a_{\zeta,0,i}}{\dim\zeta}\right)^{n}(\dim\zeta) +(-1)^{n-1}\left(\frac{a_{\zeta,1,i}}{\dim\zeta}\right)^{n}(\dim\zeta)$$
    \noindent And the Thoma parameters associated to $\chi_{|\mathfrak{S}_\infty}$ are $\alpha,\beta$ with
    $$\alpha = \bigcup_{\underset{1\leq\ell\leq\dim\zeta}{\zeta\in\hat{T}}}\frac{a_{\zeta,0}^{(\ell)}}{\dim\zeta},\quad\text{et} \quad \beta=\bigcup_{\underset{1\leq\ell\leq\dim\zeta}{\zeta\in\hat{T}}}\frac{a_{\zeta,1}^{(\ell)}}{\dim\zeta}$$
    \noindent with $a_{\zeta,\varepsilon}^{(\ell)}$ being a copy of $a_{\zeta,\varepsilon}$. According to the theorem \ref{thoma}, we obtain that finitely many $a_{\zeta,\varepsilon}$ are non-zero, and there exists $d\in\mathbb{N}$ such that for all $\zeta\in\widehat{T}$, $\varepsilon\in\{0,1\}$, for all $i$, 
    $$d\frac{a_{\zeta,\varepsilon,i}}{\dim\zeta}\in\mathbb{N}$$
    Hence the first and the fourth conditions. We also obtain the following equality $$\sum_{\underset{1\leq\ell\leq\dim\zeta}{\zeta\in\hat{T}}} \left\|\frac{a_{\zeta,0}^{(\ell)}}{\dim\zeta}\right\| +\left\|\frac{a_{\zeta,1}^{(\ell)}}{\dim\zeta}\right\|=1$$
    \noindent And by definition of $a_{\zeta,\varepsilon}^{(\ell)}$, 
    $$\sum_{\zeta\in\hat{T}} \left\|\frac{a_{\zeta,0}}{\dim\zeta}\right\|(\dim\zeta) +\left\|\frac{a_{\zeta,1}}{\dim\zeta}\right\|(\dim\zeta)=\sum_{\zeta\in\hat{T}} \left\|a_{\zeta,0}\right\| +\left\|a_{\zeta,1}\right\|=1$$
    Nevertheless, by the definition of the set of parameters,
   
       $$ 1\geq \sum_{{\underset{\varepsilon\in\{0,1\}}{\zeta\in\hat{T}}}}
\|a_{\zeta,\varepsilon}\|+\sum_{\zeta\in\hat{T}}\mu_\zeta\geq\sum_{{\underset{\varepsilon\in\{0,1\}}{\zeta\in\hat{T}}}}
\|a_{\zeta,\varepsilon}\|=1.$$
    Thus all inequalities are equalities, and for all $\zeta\in\hat{T}$, $\mu_\zeta=0$. 
    
    \noindent Hence the necessary conditions.

\subsection{The sufficient condition}

To prove the sufficient condition, we need to construct an explicit Yang-Baxter couple corresponding to a given family of parameters. Take $W=\mathbb{C}$. Let $(a_{\zeta,\varepsilon})_{\zeta,\varepsilon},\mu$ be parameters and $d\in\mathbb{N}$ verifying the conditions described in Theorem \ref{final}. Let $V_{\zeta,\varepsilon,i}$ and $W_{\zeta,\varepsilon,i}$ be two (complex) Hilbert spaces of dimension $\dim\zeta$ and $d_{\zeta,\varepsilon,i}:=d\frac{a_{\zeta,\varepsilon,i}}{\dim\zeta}$ for all $\zeta\in\hat{T}$, $\varepsilon\in\{0,1\}$, $i\in\mathbb{N}$ such that
$$V=\bigoplus_{\zeta,\varepsilon,i} V_{\zeta,\varepsilon,i}\otimes W_{\zeta,\varepsilon,i}$$
\noindent For all $\zeta\in\hat{T},\varepsilon\in\{0,1\},i\in\mathbb{N}$, let $R_{\zeta,\varepsilon,i}\in End\left((V_{\zeta,\varepsilon,i}\otimes W_{\zeta,\varepsilon,i})^{\otimes2}\right)$ verifying for all $v,v'\in V_{\zeta,\varepsilon,i}$ and $w,w'\in W_{\zeta,\varepsilon,i}$ :
$$R_{\zeta,\varepsilon,i} (v\otimes w\otimes v'\otimes w')= (-1)^\varepsilon v'\otimes w\otimes v\otimes w'$$
In other words, $R_{\zeta,\varepsilon,i}$ flips the $V_{\zeta,\varepsilon,i}$ components and is $\pm$id on the $W_{\zeta,\varepsilon,i}$ components.

\begin{lemma}
$R_{\zeta,\varepsilon,i}$ is an $R$-matrix with Thoma parameters $\alpha_1=\alpha_2=\cdots=\alpha_{\dim\zeta}=\frac{1}{\dim\zeta}$, $\beta=0$ if $\varepsilon=0$ and $\alpha=0$, $\beta_1=\beta_2=\cdots=\beta_{\dim\zeta}=\frac{1}{\dim\zeta}$ if $\varepsilon=1$.
\end{lemma}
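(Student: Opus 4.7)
The plan is to verify the $R$-matrix axioms for $R:=R_{\zeta,\varepsilon,i}$ and then identify its Thoma parameters by computing $\chi_R$ on the cycles $c_n$ and matching against the power-sum formula from Theorem \ref{thoma}. Throughout I write $V:=V_{\zeta,\varepsilon,i}$, $W:=W_{\zeta,\varepsilon,i}$ and $U:=V\otimes W$.

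\textbf{Axioms.} First I would reorder tensor factors to identify $U^{\otimes 2}\cong V^{\otimes 2}\otimes W^{\otimes 2}$, and likewise $U^{\otimes 3}\cong V^{\otimes 3}\otimes W^{\otimes 3}$. Under this identification $R$ becomes $(-1)^{\varepsilon}\,\tau_{V}\otimes \id_{W^{\otimes 2}}$, where $\tau_{V}$ is the standard flip on $V\otimes V$. Involutivity $R^{2}=1\otimes 1$ is then immediate from $\tau_{V}^{2}=\id$. For the Yang-Baxter equation, both sides of $(R\otimes 1)(1\otimes R)(R\otimes 1)=(1\otimes R)(R\otimes 1)(1\otimes R)$ translate into $(-1)^{3\varepsilon}$ times the corresponding flip equation on $V^{\otimes 3}$, tensored with $\id_{W^{\otimes 3}}$, and both sides of the flip YBE equal the order-reversing permutation of $V^{\otimes 3}$, so equality holds.

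\textbf{Thoma parameters.} Since each $R_{k}$ acts on $U^{\otimes\infty}$ as a signed partial flip of the $V$-components at positions $k,k+1$ only, a short argument on the generators $\sigma_{i}$ of $\mathfrak{S}_{n}$ gives the global formula
$$\rho_{R}(\sigma)=\operatorname{sgn}(\sigma)^{\varepsilon}\cdot\pi_{V}(\sigma)\otimes\id_{W^{\otimes n}}$$
for any $\sigma\in\mathfrak{S}_{n}$ acting on the first $n$ tensor factors, where $\pi_{V}$ denotes the permutation action of $\mathfrak{S}_{n}$ on $V^{\otimes n}$. Using the classical identity $\tr_{V^{\otimes n}}(\pi_{V}(c_{n}))=(\dim\zeta)^{c(c_{n})}=\dim\zeta$ together with the normalized trace $\tau$, a direct computation then yields
$$\chi_{R}(c_{n})=\tau(\rho_{R}(c_{n}))=\frac{(-1)^{(n-1)\varepsilon}\,\dim\zeta\cdot(\dim W)^{n}}{(\dim\zeta\cdot\dim W)^{n}}=\frac{(-1)^{(n-1)\varepsilon}}{(\dim\zeta)^{\,n-1}}.$$
Since $\chi_{R}$ is extremal, its Thoma parameters are uniquely determined by these moments via Theorem \ref{thoma}; matching against $\chi(c_{n})=\sum_{i}\alpha_{i}^{n}+(-1)^{n-1}\beta_{i}^{n}$ reads off exactly the claimed values, namely $\dim\zeta$ copies of $1/\dim\zeta$ on the $\alpha$-side with $\beta=0$ when $\varepsilon=0$, and symmetrically on the $\beta$-side when $\varepsilon=1$.

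\textbf{Main obstacle.} The only delicate step is the reordering identification $V\otimes W\otimes V\otimes W\cong V^{\otimes 2}\otimes W^{\otimes 2}$ and the resulting factorization $\rho_{R}(\sigma)=\operatorname{sgn}(\sigma)^{\varepsilon}\pi_{V}(\sigma)\otimes\id$: the operator $R$ is not literally a tensor product on $U\otimes U$, so one must verify carefully that the reshuffle intertwines each $R_{k}$ with the signed flip on $V$-factors tensored with the identity on $W$-factors. Once this is pinned down, both the $R$-matrix axioms and the power-sum computation reduce to well-known properties of the flip and of the permutation representation of $\mathfrak{S}_{n}$ on $V^{\otimes n}$.
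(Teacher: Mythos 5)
Your proof is correct and follows essentially the same route as the paper: establish involutivity and the Yang--Baxter equation from the (signed) flip structure of $R_{\zeta,\varepsilon,i}$, then compute $\chi_R(c_n)$ and read off the Thoma parameters via the uniqueness in Thoma's classification. The only difference is presentational: you organise the trace computation through the factorization $\rho_R(\sigma)=\operatorname{sgn}(\sigma)^\varepsilon\,\pi_V(\sigma)\otimes\id_{W^{\otimes n}}$ and the identity $\tr(\pi_V(c_n))=\dim V$, and you treat both signs $\varepsilon\in\{0,1\}$ uniformly, whereas the paper counts diagonal contributions on an orthonormal basis directly and writes out only the case $\varepsilon=0$.
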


\begin{proof}
We assume $\varepsilon=0$ to avoid heavy notations but the other case is analogous. Let $v_1,v_2,v_3\in V_{\zeta,\varepsilon,i} $ and $w_1,w_2,w_3\in W_{\zeta,\varepsilon,i}$.
\begin{align*}
(R\otimes 1)(1\otimes R)(R&\otimes 1)(v_1\otimes w_1)\otimes(v_2\otimes w_2)\otimes(v_3\otimes w_3)\\[2mm]
&=(R\otimes 1)(1\otimes R)(v_2\otimes w_1)\otimes(v_1\otimes w_2)\otimes(v_3\otimes w_3)\\[4mm]
&=(R\otimes 1)(v_2\otimes w_1)\otimes(v_3\otimes w_2)\otimes(v_1\otimes w_3)\\[4mm]
&=(v_3\otimes w_1)\otimes(v_2\otimes w_2)\otimes(v_1\otimes w_3)
\end{align*}
\noindent and 
\begin{align*}
(1\otimes R)(R\otimes 1)(1&\otimes R)(v_1\otimes w_1)\otimes(v_2\otimes w_2)\otimes(v_3\otimes w_3)\\[2mm]
&=(1\otimes R)(R\otimes 1)(v_1\otimes w_1)\otimes(v_3\otimes w_2)\otimes(v_2\otimes w_3)\\[4mm]
&=(1\otimes R)(v_3\otimes w_1)\otimes(v_1\otimes w_2)\otimes(v_2\otimes w_3)\\[4mm]
&=(v_3\otimes w_1)\otimes(v_2\otimes w_2)\otimes(v_1\otimes w_3)
\end{align*}

\noindent Let $n\geq1$. If $A:= V_{\zeta,\varepsilon,i}\otimes  W_{\zeta,\varepsilon,i}$, and $v_1,\dots,v_n\in V_{\zeta,\varepsilon,i}$, and $w_1,\dots w_n\in W_{\zeta,\varepsilon,i}$. We assume that $v_1,\dots v_n$ are in an orthonormal set.

\begin{align*}
(R\otimes1_A^{\otimes n-1})&(1_A\otimes R\otimes1_A^{\otimes n-2})\dots (1_A^{\otimes n-1}\otimes R)(v_1\otimes w_1)\otimes\dots\otimes(v_n\otimes w_n)\\[2mm]
&\not\perp (v_1\otimes w_1)\otimes\dots\otimes(v_n\otimes w_n)\quad \Longleftrightarrow \forall i,j \quad v_i\not\perp v_j\Longleftrightarrow\forall i,j \quad v_i= v_j
\end{align*}
We recall that $\dim V_{\zeta,\varepsilon,i}=\dim\zeta $ and $\dim W_{\zeta,\varepsilon,i}=d_{\zeta,\varepsilon,i}$. This implies 
$$\tr((R\otimes1_A^{\otimes n-1})(1_A\otimes R\otimes1_A^{\otimes n-2})\dots (1_A^{\otimes n-1}\otimes R))=\sum_{k=1}^{\dim\zeta}(\dim W_{\zeta,\varepsilon,i})^n$$
\noindent and  
$$\tau(\yb(\sigma_1\dots\sigma_{n-1}))=\frac{1}{(\dim\zeta)^n{d_{\zeta,\varepsilon,i}}^n}(\dim\zeta) {d_{\zeta,\varepsilon,i}}^n=\frac{1}{(\dim\zeta)^{n-1}}$$
Thus, the Thoma parameters are 
$$\alpha_1=\alpha_2=\cdots=\alpha_{\dim\zeta}=\frac{1}{\dim\zeta}$$
\end{proof}

We define the following normal form :
$$R:=\underset{\zeta,\varepsilon,i}{\boxplus}R_{\zeta,\varepsilon,i}$$
\begin{remark}
Let $V=\oplus_i V_i$, and $R_i\in End(V_i\otimes V_i)$ for all $i$ an $R$-matrix with Thoma parameters $(\{\alpha_j^{(i)}\}_j,\{\beta_j^{(i)}\}_j)$.
Let $d_i=\dim V_i$ and $d=\dim V$. Then, the Thoma parameters of the $R$-matrix $R=\boxplus_i R_i$ are $$\left(\left\{\frac{d_i}{d}\alpha_j^{(i)}\right\}_{i,j}~,~\left\{\frac{d_i}{d}\beta_j^{(i)}\right\}_{i,j}\right)$$
\end{remark}

Thus, in our case, $R_{\zeta,\varepsilon,i}\in End(E_{\zeta,\varepsilon,i}\otimes E_{\zeta,\varepsilon,i})$ with $E_{\zeta,\varepsilon,i}:=V_{\zeta,\varepsilon,i}\otimes W_{\zeta,\varepsilon,i}$ and $\dim E_{\zeta,\varepsilon,i}=da_{\zeta,\varepsilon,i}$. So the Thoma parameters of the normal form $R$-matrix $R$ are 
$$\alpha=\bigcup_{\zeta,i,\ell} \left\{  \frac{a_{\zeta,0,i,\ell}}{\dim\zeta} \right\}\qquad\qquad\beta=\bigcup_{\zeta,i,\ell} \left\{  \frac{a_{\zeta,1,i,\ell}}{\dim\zeta} \right\}$$
\noindent with $a_{\zeta,\varepsilon,i,\ell}$ a copy of $a_{\zeta,\varepsilon,i}$ for $1\leq\ell\leq\dim\zeta$.

Let $\pi$ be the representation of $T$ such that for all $\zeta,\varepsilon,i$, for all $t\in T$
$$\pi(t)_{|E_{\zeta,\varepsilon,i}}=\zeta(t)\otimes \id_{W_{{\zeta,\varepsilon,i}}}$$

\paragraph{Extremal Yang-Baxter couple}Let us show that $$R(\pi\otimes1) R=1\otimes\pi$$
\noindent This will prove that $(\pi,R)$ is an extremal Yang-Baxter couple. Let $t\in T$, let $x\otimes y\in E_{\zeta,\varepsilon,i}\otimes E_{\zeta',\varepsilon',j}$ with $(\zeta,\varepsilon,i)\neq (\zeta',\varepsilon',j)$.
$$R(\pi(t)\otimes1) R(x\otimes y)=R(\pi(t)\otimes1) (y\otimes x)=R\left[\left(\zeta(t)\otimes\id~ y\right)\otimes x\right]=(1\otimes\pi(t))(x\otimes y)$$
\noindent Let $(v_1\otimes w_1)\otimes (v_2\otimes w_2)\in E_{\zeta,\varepsilon,i}\otimes E_{\zeta,\varepsilon,i}$.
\begin{align*}
R(\pi(t)\otimes1) R(v_1\otimes w_1\otimes v_2\otimes w_2)&=R_{\zeta,\varepsilon,i}(\pi(t)\otimes1) R_{\zeta,\varepsilon,i}(v_1\otimes w_1\otimes v_2\otimes w_2)\\[3mm]
&=R_{\zeta,\varepsilon,i}(\pi(t)\otimes1)(v_2\otimes w_1\otimes v_1\otimes w_2)\\[3mm]
&=R_{\zeta,\varepsilon,i} (\zeta(t)v_2\otimes w_1\otimes v_1\otimes w_2)\\[3mm]
&=v_1\otimes w_1\otimes \zeta(t)v_2\otimes w_2\\[2mm]
&=(1\otimes\pi(t))(v_1\otimes w_1\otimes v_2\otimes w_2)
\end{align*}
Hence the extremality of the Yang-Baxter couple $(\pi,R)$.

\paragraph{Parameters} Let $\rho$ be the Yang-Baxter representation of $G$ associated to $(\pi,R)$, and $\chi$ its character. We already know that for $n\in\mathbb{N}$,
$$\chi(\sigma_1\sigma_2\cdots\sigma_{n-1})=\sum_{\zeta\in\hat{T}} \sum_{i\in\mathbb{N}}\left(\frac{a_{\zeta,0,i}}{\dim\zeta}\right)^{n}(\dim\zeta) +(-1)^{n-1}\left(\frac{a_{\zeta,0,i}}{\dim\zeta}\right)^{n}(\dim\zeta)$$
We saw that every cyclic or elementary element is conjugate in $G$ to an element of the form $g=(d,\sigma)\in G$ with $\sigma=(1\cdots n)$ a cyclic permutation and $d=(t,1,1,\cdots)$. Let $g$ be this element. To simplify the notations, we will write $\pi(t)$ instead of $(\pi(t)\otimes1^{\otimes n-1})$.
$$\left( \pi(t)R_1\cdots R_{n-1} \right)E_{\zeta_1,\varepsilon_1,i_1}\otimes\cdots\otimes E_{\zeta_n,\varepsilon_n,i_n}\not\perp E_{\zeta_1,\varepsilon_1,i_1}\otimes\cdots\otimes E_{\zeta_n,\varepsilon_n,i_n}$$
$$\Longleftrightarrow (\zeta_1,\varepsilon_1,i_1)=(\zeta_2,\varepsilon_2,i_2)=\cdots=(\zeta_n,\varepsilon_n,i_n)$$
\noindent and for $v_1\otimes w_1\otimes\cdots\otimes v_n\otimes w_n\in E_{\zeta,\varepsilon,i}^{\otimes n} $, if $\varepsilon=0$ then
$$\left( \pi(t)R_1\cdots R_{n-1} \right)v_1\otimes w_1\otimes\cdots\otimes v_n\otimes w_n=\zeta(t)v_n\otimes w_1\otimes v_1\otimes w_2 \otimes\cdots\otimes v_{n-1}\otimes w_n$$
\noindent and if $\varepsilon=1$ then 
$$\left( \pi(t)R_1\cdots R_{n-1} \right)v_1\otimes w_1\otimes\cdots\otimes v_n\otimes w_n=(-1)^{n-1}\zeta(t)v_n\otimes w_1\otimes\cdots\otimes v_{n-1}\otimes w_n$$
\noindent Thus,
$$\tr\left( \pi(t)R_1\cdots R_{n-1} \right)=\sum_{\zeta,\varepsilon,i}{d_{\zeta,\varepsilon,i}}^n\chi_\varepsilon(\sigma)\tr(\zeta(t))$$
\noindent and 
\begin{align*}
\chi(g)&=\frac{1}{d^n}\tr\left( \pi(t)R_1\cdots R_{n-1} \right)\\[3mm]
&=\frac{1}{d^n}\sum_{\zeta,\varepsilon,i}\frac{d^n{a_{\zeta,\varepsilon,i}}^n}{\dim_\zeta^n}\chi_\varepsilon(\sigma)\tr(\zeta(t))\\[3mm]
&=\sum_{\zeta\in\hat{T}}\left\{\sum_{\varepsilon\in\{0,1\}}\sum_{i\in\mathbb{N}}\left(\frac{{a_{\zeta,\varepsilon,i}}}{\dim_\zeta}\right)^n\chi_\varepsilon(\sigma)\right\}\chi_\zeta(t)
\end{align*}
Finally, for $g\in G$ with the standard decomposition such as in the proposition \ref{standard}, we get by extremality
\begin{align*}
        \chi(g)=\prod_{1\leq k\leq r}&\left\{\sum_{\zeta\in\hat{T}}\left( \sum_{\varepsilon\in\{0,1\}}\sum_{i\in\mathbb{N}}\frac{a_{\zeta,\varepsilon,i}}{\dim\zeta}
 \right)\chi_\zeta(t_{q_k})\right\}\\[3mm]
 \times\prod_{1\leq j\leq m}&\left\{\sum_{\zeta\in\hat{T}}\left( \sum_{\varepsilon\in\{0,1\}}\sum_{i\in\mathbb{N}}\left(\frac{a_{\zeta,\varepsilon,i}}{\dim\zeta}\right)^{\ell(\sigma_j)}\chi_\varepsilon(\sigma_j)
 \right)\chi_\zeta(P_{\sigma_j}(d_j))
 \right\}
    \end{align*}

This proves the Theorem \ref{final}.

\vskip 3cm
\subsubsection*{Acknowledgements}
I would like to thank Kenny de Commer for his help, advice and numerous reviews, and Vrije Universiteit Brussels for its hospitality.

\newpage

\bibliographystyle{alpha}
\bibliography{bibiblio}

\end{document}